\newcommand{\disc}{\mathbb{D}}
\DeclareMathOperator{\RE}{Re}
\numberwithin{equation}{section}
\newtheorem{theorem}{Theorem}[section]
\newtheorem{lemma}[theorem]{Lemma}
\newtheorem{corollary}[theorem]{Corollary}
\theoremstyle{remark}
\newtheorem{remark}[theorem]{Remark}
\begin{document}
 	\title{MARX-STROHH\"{A}CKER THEOREM FOR MULTIVALENT FUNCTIONS}
 	
 	\author[P. Gupta]{Prachi Gupta}
 	\address{Department of Mathematics, University of Delhi, Delhi-110007, India}
 	\email{prachigupta161@gmail.com}

 	\author[S. Nagpal]{Sumit Nagpal}
 	\address{Department of Mathematics, Ramanujan College, University of Delhi,
 		Delhi--110019, India}
 	\email{sumitnagpal.du@gmail.com }
 	
 	\author[V. Ravichandran]{V. Ravichandran}
 	
 	\address{Department of Mathematics,
 		National Institute of Technology,
 		Tiruchirappalli-620015, India}
 	\email{ravic@nitt.edu; vravi68@gmail.com}
 	
 	\keywords{differential subordination, fixed initial coefficient, convex and
 			starlike functions, multivalent function.}
 	\subjclass[2010]{Primary 30C80; Secondary 30C45}
 	
 	\maketitle
 	\begin{abstract}
Some differential implications of classical Marx-Strohh\"acker theorem are extended for multivalent functions. These results are also generalized for functions with fixed second coefficient by using the theory of first order differential subordination which in turn, corrects the results of Selvaraj and Stelin [On multivalent functions associated with fixed second coefficient and the principle of subordination, Int. J.  Math. Anal. {\bf 9} (2015), no.~18, 883--895].
 	\end{abstract}
 	
	\section{Introduction}
 One of the classical result in univalent function theory is the Marx-Strohh\"{a}cker theorem \cite {MARK,STRO} which connects convex and starlike functions. Although the original proof of theorem was complicated, Miller and Mocanu \cite[Section 2.6, p.\ 56]{MM} gave simple algebraic proof using the technique of differential subordination. In 2017, Nunokawa \emph{et al.} \cite{NUNO2} extended some of these results for multivalent functions. In this paper, we extend some other forms of Marx-Strohh\"{a}cker type results for multivalent functions. Moreover, these results are generalized using the theory of first-order differential subordination formulated by Ali \emph{et al.} \cite{ALI} for functions with fixed initial coefficient.

For $n\in \mathbb{N}$ and $a\in \mathbb{C}$, let $\mathcal{H}[a,n]$ denotes the class of analytic functions $f$ defined in the open unit disk $\mathbb{D}:=\{z\in\mathbb{C}:|z|<1\}$ of the form
\[f(z)=a+a_nz^n+a_{n+1}z^{n+1}+\cdots.\]
Also, let $\mathcal{A}_p$ ($p\in \mathbb{N}$) be its subclass consisting of normalized functions $f$ of the form
\[f(z)=z^p+a_{p+1}z^{p+1}+a_{p+2}z^{p+2}+\cdots.\]
Set $\mathcal{A}:=\mathcal{A}_1$. The subclass of $\mathcal{A}$ consisting of univalent functions is denoted by $\mathcal{S}$. A function $f\in \mathcal{A}_p$ is said to be $p$-valent convex of order $\alpha$ (or $p$-valent starlike of order $\alpha$) in $\mathbb{D}$ ($0\leq \alpha<p$) if
\[\RE\left(1+\frac{zf''(z)}{f'(z)}\right)>\alpha\quad \left(\mbox{or}\quad \RE\left(\frac{zf'(z)}{f(z)}\right)>\alpha\right)\]
for all $z\in \mathbb{D}$. More details of $p$-valent starlike and $p$-valent convex functions can be found in \cite{NUNO1,NUNO2,NUNO3,srivastava}. For $f\in\mathcal{A}$, Marx-Strohh\"{a}cker theorem asserts that, for all $z\in\disc$, \begin{equation}\label{eq1}
\RE\left(1+\frac{zf''(z)}{f'(z)}\right)>0\,  \Rightarrow\, \RE\left(\frac{zf'(z)}{f(z)}\right)>\frac{1}{2}\, \Rightarrow\,  \RE\left(\frac{f(z)}{z}\right)>\frac{1}{2}\,\, .
\end{equation}
Nunokawa \emph{et al.} \cite{NUNO2} extended these differential implications for multivalent functions $f\in \mathcal{A}_p$ ($p\geq 2$) by finding $\beta$ and $\gamma$ such that
\begin{equation}\label{eq2}
\RE\left(1+\frac{zf''(z)}{f'(z)}\right)>\alpha\,  \Rightarrow\, \RE\left(\frac{zf'(z)}{f(z)}\right)>\beta\, \Rightarrow\,  \RE\left(\frac{f(z)}{z^p}\right)>\gamma \, \,  .
\end{equation}
There are two more differential implications in Marx-Strohh\"{a}cker theorem:
 \begin{equation}\label{eq3}
\RE\left(1+\frac{zf''(z)}{f'(z)}\right)>0\,  \Rightarrow\, \RE\sqrt{f'(z)}>\frac{1}{2}\, \Rightarrow\,  \RE\left(\frac{f(z)}{z}\right)>\frac{1}{2}\,\,
\end{equation}
for a function $f\in \mathcal{A}$. These implications are extended for the class of multivalent functions $\mathcal{A}_p$ ($p\geq 2$) in Section 2 using the following lemma due to Miller and Mocanu.

 \begin{lemma}\cite{MM}\label{MM}
 Let $\Omega\subseteq\mathbb{C}$ and $\psi:\mathbb{C}^2\to\mathbb{C}$ satisfies the admissibility condition $\psi(i\rho,\sigma)\not\in \Omega$ whenever $\sigma\leq -n (1+\rho)^2/2$
 where $\rho \in \mathbb{R}$ and $n\in\mathbb{N}$. If $p\in\mathcal{H}[1,n]$ and $\psi(p(z),zp'(z))\in \Omega$ for $z\in\mathbb{D}$, then $\RE p(z)>0$ for all $z\in\mathbb{D}$.
 \end{lemma}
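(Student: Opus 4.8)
The plan is to argue by contradiction, transferring the half-plane condition $\RE p(z)>0$ to a unit-disk condition through the Cayley transform and then extracting information at the critical point via a Jack-type maximum-modulus lemma. Suppose $\RE p$ does not stay positive on $\mathbb{D}$. Since $p(0)=1$ and $p$ is continuous, there is a smallest radius $r_0\in(0,1)$ and a point $z_0$ with $|z_0|=r_0$ at which $\RE p$ first vanishes, so that $\RE p(z)>0$ for $|z|<r_0$ while $p(z_0)=i\rho$ for some $\rho\in\mathbb{R}$. The goal is to show that at this point the pair $(p(z_0),z_0p'(z_0))$ falls into the forbidden set described by the admissibility condition, contradicting $\psi(p(z),zp'(z))\in\Omega$.

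First I would set $w=(1-p)/(1+p)$, so that $\RE p>0$ is equivalent to $|w|<1$, with $p=(1-w)/(1+w)$ and $w(0)=0$. Since $p\in\mathcal{H}[1,n]$, the transform annihilates no low-order terms and $w\in\mathcal{H}[0,n]$. By construction $|w(z_0)|=1$ while $|w(z)|<1$ for $|z|<r_0$, so $|w|$ attains its maximum over the closed disk $|z|\le r_0$ at $z_0$. The generalized Jack--Clunie lemma then gives $z_0w'(z_0)=k\,w(z_0)$ for some real $k\ge n$.

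Differentiating the inverse transform yields $zp'(z)=-2zw'(z)/(1+w(z))^2$. Substituting $w(z_0)=(1-i\rho)/(1+i\rho)$ and $z_0w'(z_0)=k\,w(z_0)$ collapses this to the real number $\sigma:=z_0p'(z_0)=-k(1+\rho^2)/2$; since $k\ge n$ we obtain $\sigma\le -n(1+\rho^2)/2$, which is exactly the range on which the admissibility hypothesis guarantees $\psi(i\rho,\sigma)\notin\Omega$. As $\psi(p(z_0),z_0p'(z_0))=\psi(i\rho,\sigma)$, this contradicts the assumption that $\psi(p(z),zp'(z))\in\Omega$ for all $z\in\mathbb{D}$; hence $\RE p>0$ throughout $\mathbb{D}$.

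I expect the main obstacle to be the sharp, $n$-sensitive estimate $\sigma\le -n(1+\rho^2)/2$: one must justify that a genuine first boundary point $z_0$ exists (via a continuity/normal-families argument) and that the maximum-modulus principle upgrades to $k\ge n$ rather than merely $k\ge 1$, which is precisely where the refined membership $p\in\mathcal{H}[1,n]$ is used. Once $k\ge n$ is in hand, the Cayley algebra reducing $z_0p'(z_0)$ to $-k(1+\rho^2)/2$ is routine.
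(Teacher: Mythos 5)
Your proof is correct. The paper itself offers no proof of this lemma---it is quoted from Miller and Mocanu's monograph [MM]---and your argument (Cayley transform $w=(1-p)/(1+p)$, the generalized Jack--Clunie lemma giving $z_0w'(z_0)=kw(z_0)$ with $k\geq n$, and the algebraic reduction $z_0p'(z_0)=-k(1+\rho^2)/2$) is precisely the standard proof found in that reference, so there is nothing genuinely different to compare. One point worth recording: the lemma as printed in the paper contains a typo, stating the admissibility range as $\sigma\leq -n(1+\rho)^2/2$ rather than $\sigma\leq -n(1+\rho^2)/2$; your derivation establishes the latter, which is the correct form and is the one actually used in every application of the lemma in the paper (e.g.\ in Theorems \ref{th2.2}, \ref{th2.4} and \ref{th2.5}, where $\sigma\leq -(1+\rho^2)/2$).
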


The second coefficient of normalized univalent functions play a vital role in shaping the geometric as well as analytic properties of functions. Ali \emph{et al.} \cite{ALI} reformulated the Miller and Mocanu's differential subordination theory \cite{M1,M2} for analytic functions with fixed preassigned initial coefficient. Several authors have used this technique to investigate the properties of normalized univalent functions with fixed second coefficient.

Let $\mathcal{H}_\zeta[a,n]$ denote the class of analytic functions $p\in \mathcal{H}[a,n]$ of the form $p(z)=a+\zeta z^n+a_{n+1}z^{n+1}+\cdots$, where $\zeta\in\mathbb{C}$ is fixed. Without loss of generality, we assume that $\zeta$ is a positive real number. Similarly, let $\mathcal{A}_{p,b}$ denote the class of all functions $f \in \mathcal{A}_p$ of the form $f(z)=z^p+bz^{p+1}+a_{p+2}z^{p+2}+\cdots$ where $b$ is a fixed non-negative real number. Nagpal and Ravichandran \cite{NAGPAL} proved \eqref{eq1} for functions $f\in \mathcal{A}_{1,b}$. In Section 3, we extend differential implications \eqref{eq2} and results obtained in Section 2 for multivalent functions $f\in \mathcal{A}_{p,b}$. It is worth to note that Selvaraj and Stelin \cite{SELVARAJ} proved the similar results. However, there was a minor error in their proofs. The same has been highlighted in the paper. The following lemma will be need in our investigation.
\begin{lemma}\cite{ALI}\label{ALI}
 Let $\Omega$ be a set in $\mathbb{C}$. Let $\psi:\mathbb{C}^2\to\mathbb{C}$ satisfies the admissibility condition
 \[\psi(i\rho,\sigma)\not\in \Omega \quad \mbox{whenever}\quad \sigma\leq -\frac{2}{2+\zeta}(1+\rho^2)\]
 where $\rho \in \mathbb{R}$, $n\in\mathbb{N}$ and $0<\zeta\leq 2$. If $p\in\mathcal{H}_\zeta[1,n]$ and $\psi(p(z),zp'(z))\in \Omega$ for $z\in\mathbb{D}$, then $\RE p(z)>0$ for all $z\in\mathbb{D}$.
 \end{lemma}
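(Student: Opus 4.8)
The plan is to argue by contradiction, reducing the statement to a single sharpened estimate for $z_0p'(z_0)$ at a first point where $p$ meets the imaginary axis. The only ingredient beyond the classical Miller--Mocanu argument of Lemma \ref{MM} is a refinement of Jack's lemma that exploits the fixed coefficient $\zeta$; this refinement is exactly what upgrades the unconstrained factor $\tfrac12$ to $\tfrac{2}{2+\zeta}$.

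First I would suppose the conclusion fails. Since $p(0)=1$ has positive real part and $p$ is continuous, there is a smallest radius $r_0\in(0,1)$ and a point $z_0$ with $|z_0|=r_0$ such that $\RE p(z)>0$ for $|z|<r_0$ while $\RE p(z_0)=0$; write $p(z_0)=i\rho$ with $\rho\in\mathbb{R}$. Setting $w=(p-1)/(p+1)$, one checks $w(0)=0$, that $w$ has $z^n$ as its lowest nonvanishing term with $w^{(n)}(0)/n!=\zeta/2$ (this is where the fixed coefficient enters, and $\zeta\le 2$ keeps $w$ a genuine self-map), and that $\RE p>0$ is equivalent to $|w|<1$; hence $|w(z)|<1$ for $|z|<r_0$ and $|w(z_0)|=1$. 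A direct computation using $p=(1+w)/(1-w)$ and $w(z_0)=(i\rho-1)/(i\rho+1)$ gives, whenever $z_0w'(z_0)=m\,w(z_0)$ with $m$ real,
\[ z_0p'(z_0)=\tfrac12(1+i\rho)^2\,z_0w'(z_0)=-\tfrac{m}{2}(1+\rho^2). \]
Thus everything reduces to a good lower bound for $m=z_0w'(z_0)/w(z_0)$.

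The heart of the argument is to prove $m\ge 4/(2+\zeta)$; classical Jack's lemma gives only $m\ge1$, which would merely reproduce the factor $\tfrac12$. To do better I would rescale by $W(z)=w(r_0z)$, so that $W:\mathbb{D}\to\mathbb{D}$ is a Schwarz function with $W(0)=0$ and (for $n=1$) $W'(0)=r_0\zeta/2=:c<1$, having boundary contact $|W(z_1)|=1$ at $z_1=z_0/r_0$. Writing $W(z)=z\,\Phi(z)$ with $\Phi(0)=c$ and $|\Phi|\le1$, and applying the boundary (Julia) form of the Schwarz lemma to $\Phi$ at $z_1$, where $|\Phi(z_1)|=1$ and $\Phi$ extends analytically, one gets that $z_1\Phi'(z_1)/\Phi(z_1)=|\Phi'(z_1)|$ is real positive with $|\Phi'(z_1)|\ge (1-c)/(1+c)$, the extremal case being a single disk automorphism. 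Therefore
\[ m=\frac{z_1W'(z_1)}{W(z_1)}=1+\frac{z_1\Phi'(z_1)}{\Phi(z_1)}\ge 1+\frac{1-c}{1+c}=\frac{2}{1+c}\ge\frac{2}{1+\zeta/2}=\frac{4}{2+\zeta}, \]
the last step using $c=r_0\zeta/2\le\zeta/2$. (For $n\ge2$ one instead has $\Phi(0)=0$, hence $m\ge2$, which is even stronger than $4/(2+\zeta)$; so the stated bound holds for every $n$.) I expect this refined Schwarz/Jack estimate to be the main obstacle, the rest being bookkeeping.

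Finally I would assemble the pieces. From $m\ge 4/(2+\zeta)$ and the identity above, $\sigma:=z_0p'(z_0)\le -\tfrac{2}{2+\zeta}(1+\rho^2)$, so the admissibility hypothesis forces $\psi(p(z_0),z_0p'(z_0))=\psi(i\rho,\sigma)\notin\Omega$. This contradicts the assumption that $\psi(p(z),zp'(z))\in\Omega$ for all $z\in\mathbb{D}$, and the contradiction establishes $\RE p(z)>0$ throughout $\mathbb{D}$.
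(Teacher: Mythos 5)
Your proof is correct, but there is nothing in the paper to compare it against: Lemma \ref{ALI} is stated as a quotation from Ali, Nagpal and Ravichandran \cite{ALI} and is used as a black box, so the paper itself contains no proof of it. Your reconstruction is, in spirit, exactly the mechanism of that cited source: run the Miller--Mocanu contradiction scheme, transfer to the Schwarz function $w=(p-1)/(p+1)$ (whose first nonzero Taylor coefficient $\zeta/2$ is where the fixed coefficient enters), and replace Jack's classical bound $m=z_0w'(z_0)/w(z_0)\ge 1$ by the refined bound $m\ge 2/(1+c)\ge 4/(2+\zeta)$ with $c=r_0\zeta/2$, obtained from the Julia--Wolff--Carath\'eodory (boundary Schwarz) inequality $|\Phi'(z_1)|\ge(1-|\Phi(0)|)/(1+|\Phi(0)|)$. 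I checked the key identities: $z_0p'(z_0)=\frac{1}{2}(1+i\rho)^2z_0w'(z_0)=-\frac{m}{2}(1+\rho^2)$, the rescaling $W(z)=w(r_0z)$ with $\Phi(z)=W(z)/z$, and the case $n\ge 2$ (where $\Phi(0)=0$ forces $m\ge 2\ge 4/(2+\zeta)$) are all right, so the admissibility hypothesis applies to $\sigma=z_0p'(z_0)$ and yields the contradiction. Two details deserve to be written out rather than assumed: (i) since $\RE p\ge 0$ on $|z|\le r_0$, one has $p\ne -1$ there, so $w$, and hence $\Phi$, is analytic in a neighbourhood of the closed disk $|z|\le r_0$; this is what makes $\Phi'(z_1)$ an honest (not merely angular) derivative, gives the reality and nonnegativity of $z_1\Phi'(z_1)/\Phi(z_1)$, and lets you derive the quantitative bound from Schwarz--Pick, namely $1-|\Phi(z)|\ge(1-|z|)(1-c)/(1+c|z|)$, divided by $1-|z|$ with $z\to z_1$ radially; (ii) your aside that ``$\zeta\le 2$ keeps $w$ a genuine self-map'' misattributes the role of that hypothesis: $|w|<1$ on $|z|<r_0$ follows from $\RE p>0$ alone, and Schwarz's lemma automatically yields $c\le 1$; the restriction $0<\zeta\le 2$ only guarantees that the constant $2/(2+\zeta)$ improves on the classical $1/2$ (indeed, combining your estimate with ordinary Jack's lemma when $\zeta\ge 2$, your argument proves the statement for every $\zeta>0$).
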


\section{Marx-Strohh\"{a}cker Type Results}
In this section, the differential implications of the form \eqref{eq3} are extended for multivalent functions $f\in \mathcal{A}_p$ ($p\geq 2$) by finding $\beta$ and $\gamma$ such that
 \begin{equation}\label{eq4}
\RE\left(1+\frac{zf''(z)}{f'(z)}\right)>\alpha\,  \Rightarrow\, \RE\sqrt{\frac{f'(z)}{pz^{p-1}}}>\beta\, \Rightarrow\,  \RE\left(\frac{f(z)}{z^p}\right)>\gamma\,\,   (z\in\disc).
\end{equation}
Whenever we talk about square-root of a function, we choose the branch such that $\sqrt{1}=1$.
To prove our results, let us prove the following simple lemma. It is worth to remark that Lemma \ref{lem} can be applied to derive the results obtained in \cite{NUNO2} with simple and less-computational proofs.

\begin{lemma}\label{lem}
If $0\leq b< a$, then the  continuous function $\phi:[0,\infty)\to \mathbb{R}$ defined by
\begin{equation}\label{eq}
\phi(x)\equiv\phi(x,a,b)=\frac{1+x}{(a-b)^2 x+b^2}
\end{equation}
satisfies
\[\min_{x\in [0,\infty)} \phi(x)=\begin{cases}
	\displaystyle\frac{1}{(a-b)^2}, & 0 \leq b\leq \displaystyle\frac{a}{2},\\
	\displaystyle\frac{1}{b^2}, & \displaystyle\frac{a}{2}\leq b<a.
	\end{cases}\]
\end{lemma}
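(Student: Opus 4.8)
The plan is to treat this as a one-variable optimization problem and exploit the fact that, with $a$ and $b$ held fixed, $\phi$ is a Möbius-type rational function of $x$ whose numerator and denominator are both affine in $x$. Such a function is strictly monotone on any interval avoiding its pole, so its extremum over $[0,\infty)$ must sit at an endpoint — either at $x=0$ or in the limit $x\to\infty$. The task therefore reduces to computing the sign of $\phi'$ and reading off the two boundary values.

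First I would differentiate. Writing $A=(a-b)^2$ and $B=b^2$ for brevity, the quotient rule gives
\[\phi'(x)=\frac{(Ax+B)-A(1+x)}{(Ax+B)^2}=\frac{B-A}{(Ax+B)^2}.\]
The denominator is positive for every relevant $x$ (indeed $A>0$ since $a>b\geq 0$, and $B=b^2>0$ whenever $b>0$), so the sign of $\phi'$ is constant and equals the sign of $B-A=b^2-(a-b)^2$. Factoring,
\[b^2-(a-b)^2=(2b-a)\,a,\]
and since $a>0$ this sign is governed entirely by $2b-a$.

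This splits the analysis into three cases, which I would dispatch in turn. When $0\leq b<a/2$ one has $\phi'<0$, so $\phi$ is strictly decreasing and its infimum is the limit $\lim_{x\to\infty}\phi(x)=1/A=1/(a-b)^2$. When $a/2<b<a$ one has $\phi'>0$, so $\phi$ is strictly increasing and its minimum is $\phi(0)=1/B=1/b^2$, attained at the left endpoint. The borderline value $b=a/2$ makes $A=B=a^2/4$, so $\phi\equiv 4/a^2$ is constant; both candidate formulas $1/(a-b)^2$ and $1/b^2$ then evaluate to $4/a^2$, confirming that the two branches agree along $b=a/2$ and that the case split is internally consistent.

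The computation is entirely routine; the only point requiring care is the decreasing branch $0\leq b\leq a/2$, where the extremal value $1/(a-b)^2$ is approached as $x\to\infty$ rather than attained at a finite $x$ (and, when $b=0$, the function is in fact unbounded near $x=0$). Accordingly, I would read the ``$\min$'' in the statement as the infimum over $[0,\infty)$ in that branch, which is precisely the asymptotic value recorded in the lemma.
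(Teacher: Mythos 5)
Your proof is correct and takes essentially the same route as the paper's: compute $\phi'$, note that its sign is that of $a(2b-a)$ (your $B-A=b^2-(a-b)^2$ is exactly the paper's numerator $a(2b-a)$), and then read off the extremal value as the limit at infinity in the decreasing case and as $\phi(0)$ in the increasing case. Your extra observations --- that $\phi$ is constant when $b=a/2$, and that in the branch $0\le b< a/2$ the stated ``minimum'' is really an infimum approached only as $x\to\infty$ (with a pole at $x=0$ when $b=0$) --- are in fact slightly more careful than the paper, which writes $\min_{x\in[0,\infty)}\phi(x)=\lim_{x\to\infty}\phi(x)$ without comment.
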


\begin{proof}
Note that
\[\phi'(x)=\frac{a(2b-a)}{((a-b)^2 x+b^2)^2}.\]
\quad Case 1: If $0\leq b\leq a/2$, then $\phi'(x)\leq 0$ so that $\phi$ is a decreasing function of $x$ and the minimum value of $\phi$ is attained at infinity. Thus
\[\min_{x\in [0,\infty)}\phi(x)=\underset{x\rightarrow\infty}{\lim}\phi(x)=\displaystyle\frac{1}{(a-b)^2}.\]
\quad Case 2: If $a/2\leq b<a$, then $\phi'(x)\geq 0$ and hence $\phi$ is an increasing function of $x$ so that the minimum value of $\phi$ is attained at the origin. Therefore
\[\min_{x\in [0,\infty)} \phi(x)=\phi(0)=\displaystyle\frac{1}{b^2}.\qedhere\]
\end{proof}

Given $0\leq \beta<1$, the problem of finding $\alpha$ such that the first differential implication of \eqref{eq4} holds for multivalent functions is investigated in the following theorem.

\begin{theorem}\label{th2.2}
For $p\in\mathbb{N}$ and $0\leq \beta<1$, let
	\begin{align}
	\alpha=\alpha(\beta,p)= \begin{cases}
	p-\displaystyle\frac{\beta}{1-\beta},& 0\leq \beta\leq\displaystyle\frac{1}{2}\\
	\\
	p-\displaystyle\frac{1-\beta}{\beta},& \displaystyle\frac{1}{2}\leq\beta <1.
	\end{cases}
	\end{align}
If $f\in\mathcal{A}_{p}$  is p-valent convex of order $\alpha$, then $$\RE\sqrt{\frac{f'(z)}{pz^{p-1}}}>\beta \quad (z\in \mathbb{D}).$$
\end{theorem}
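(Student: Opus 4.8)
The plan is to reduce the statement to the Miller--Mocanu admissibility criterion of Lemma \ref{MM}. Since $f$ is $p$-valent convex of order $\alpha\ge 0$, the quantity $1+zf''/f'$ stays in the right half-plane, which forces $f'$ to be non-vanishing on $\mathbb{D}\setminus\{0\}$; hence $f'(z)/(pz^{p-1})$ is analytic, non-vanishing, and equals $1$ at the origin, so the chosen branch $w(z):=\sqrt{f'(z)/(pz^{p-1})}$ is well defined and analytic with $w(0)=1$. I then set
\[
q(z)=\frac{w(z)-\beta}{1-\beta},
\]
so that $q\in\mathcal{H}[1,1]$, $q(0)=1$, and the desired conclusion $\RE w(z)>\beta$ is equivalent to $\RE q(z)>0$.

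First I would relate the hypothesis to $q$ and $zq'$. Differentiating $w^2=f'(z)/(pz^{p-1})$ logarithmically gives $f''/f'=(p-1)/z+2w'/w$, whence
\[
1+\frac{zf''(z)}{f'(z)}=p+\frac{2zw'(z)}{w(z)}=p+\frac{2(1-\beta)zq'(z)}{(1-\beta)q(z)+\beta}=:\psi\bigl(q(z),zq'(z)\bigr).
\]
Taking $\Omega=\{w\in\mathbb{C}:\RE w>\alpha\}$, the convexity hypothesis is exactly the assertion $\psi(q(z),zq'(z))\in\Omega$ for all $z\in\mathbb{D}$. By Lemma \ref{MM} with $n=1$, it then suffices to verify the admissibility condition $\psi(i\rho,\sigma)\notin\Omega$ whenever $\sigma\le-\tfrac12(1+\rho^2)$.

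Next I would compute the real part along the imaginary axis. A short calculation yields
\[
\RE\psi(i\rho,\sigma)=p+\frac{2\beta(1-\beta)\,\sigma}{\beta^2+(1-\beta)^2\rho^2}.
\]
Since the coefficient of $\sigma$ is non-negative and $\sigma\le-\tfrac12(1+\rho^2)$, this gives
\[
\RE\psi(i\rho,\sigma)\le p-\beta(1-\beta)\,\frac{1+\rho^2}{\beta^2+(1-\beta)^2\rho^2}
=p-\beta(1-\beta)\,\phi(\rho^2,1,\beta),
\]
where $\phi$ is the function of Lemma \ref{lem} with $a=1$ and $b=\beta$. Thus proving $\psi(i\rho,\sigma)\notin\Omega$, i.e.\ $\RE\psi(i\rho,\sigma)\le\alpha$, reduces to the one-variable inequality $\beta(1-\beta)\min_{x\ge0}\phi(x,1,\beta)\ge p-\alpha$.

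Finally, Lemma \ref{lem} supplies the minimum directly: it equals $1/(1-\beta)^2$ when $0\le\beta\le\tfrac12$ (i.e.\ $b\le a/2$) and $1/\beta^2$ when $\tfrac12\le\beta<1$. Substituting, $\beta(1-\beta)\min\phi$ equals $\beta/(1-\beta)$ on the first range and $(1-\beta)/\beta$ on the second, which is precisely $p-\alpha$ for the stated piecewise $\alpha$; so the admissibility inequality holds, with equality at the threshold, and Lemma \ref{MM} gives $\RE q(z)>0$. The only genuinely delicate step is recasting the admissibility condition as the minimization of $\phi(\rho^2,1,\beta)$ over $\rho$—once phrased this way, Lemma \ref{lem} does all the work, and the two branches of $\alpha$ are seen to emerge exactly from the two cases $b\lessgtr a/2$ of that lemma. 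The endpoint $\beta=0$ is trivial, since there the $\sigma$-term vanishes and $\RE\psi=p=\alpha$.
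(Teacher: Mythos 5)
Your proof is correct and takes essentially the same route as the paper's: the same substitution $q(z)=\bigl(\sqrt{f'(z)/(pz^{p-1})}-\beta\bigr)/(1-\beta)$, the same $\psi$ and $\Omega$, and the same reduction of the admissibility condition of Lemma \ref{MM} to the minimization in Lemma \ref{lem}. Your only additions are minor points of rigor the paper leaves implicit (checking $f'\neq 0$ on $\mathbb{D}\setminus\{0\}$ so the branch of the square root is well defined, and the trivial endpoint $\beta=0$).
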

\begin{proof}
The function $q:\mathbb{D}\to\mathbb{C}$ defined by	\[q(z)=\frac{1}{1-\beta}\left(\sqrt{\frac{f'(z)}{pz^{p-1}}}-\beta\right)\]
is analytic in $\mathbb{D}$ and $q\in\mathcal{H}[1,1].$ A calculation using the equation
\[f'(z)=p((1-\beta)q(z)+\beta)^2z^{p-1}\]
shows that
\[	1+\frac{zf''(z)}{f'(z)}=\frac{2(1-\beta)zq'(z)}{(1-\beta)q(z)+\beta}+p.\]
Define the function $\psi:D\to\mathbb{C}$ where $D=\mathbb{C}^2\setminus\{(-\beta/(1-\beta),0)\}$ by
$$\psi(r,s)=\frac{2(1-\beta)s}{(1-\beta)r+\beta}+p,$$
and $\Omega=\{w\in\mathbb{C}:\RE w>\alpha\}$. Using the hypothesis, we have $ \psi(q(z),zq'(z))\in \Omega$ for all $z\in \mathbb{D}$. To prove that $\RE q(z)>0$, we apply Lemma \ref{MM}. Let $\rho\in\mathbb{R}$ and $\sigma \leq -(1+\rho^2)/2$. Then
\begin{align*}
\RE\psi(i\rho,\sigma)&=\frac{2\beta(1-\beta)\sigma}{(1-\beta)^2\rho^2+\beta^2}+p\\
&\leq \frac{-\beta(1-\beta)(1+\rho^2)}{(1-\beta)^2\rho^2+\beta^2}+p\\
&=-\beta(1-\beta)\phi(\rho^2,1,\beta) + p
\end{align*}
where $\phi$ is defined by \eqref{eq}. By Lemma \ref{lem}, it follows that
\[\min_{\rho\in \mathbb{R}} \phi(\rho^2,1,\beta)=\begin{cases}
	\displaystyle\frac{1}{(1-\beta)^2}, & 0 \leq\beta\leq\displaystyle\frac{1}{2},\\
	\\
	\displaystyle\frac{1}{\beta^2}, & \displaystyle\frac{1}{2}\leq \beta<1.
	\end{cases}\]
Therefore, it follows that
\[\RE\psi(i\rho,\sigma)\leq -\beta(1-\beta)\min_{\rho\in\mathbb{R}} \phi(\rho^2,1,\beta)+p= \begin{cases}
	p-\displaystyle\frac{\beta}{1-\beta}, & 0 \leq\beta\leq \displaystyle\frac{1}{2},\\
	\\
	p-\displaystyle\frac{1-\beta}{\beta}, & \displaystyle\frac{1}{2}\leq \beta<1.
	\end{cases}\]
Hence $\RE\psi(i\rho,\sigma)\leq \alpha$ so that $\psi(i\rho,\sigma)\notin \Omega$ and Lemma \ref{MM} gives the desired result.
\end{proof}

\begin{remark}
If $p=1$ and $\beta=1/2$, then Theorem \ref{th2.2} reduces to the first differential implication of \eqref{eq3}.
\end{remark}

The next two theorems concern the second differential implication of \eqref{eq4} for multivalent functions for specific values of $\gamma$. Given the complete range of $\gamma$, $0<\gamma<1$, the problem of finding $\beta$ is still unsolved.
\begin{theorem}\label{th2.4}
Let $p\in\mathbb{N}$ and $\gamma$ satisfies
\[\frac{p^2+1}{(p+1)^2}<\gamma<1.\]
If $f\in\mathcal{A}_{p}$ is locally $p$-valent and $$\RE\sqrt{\frac{f'(z)}{pz^{p-1}}}>\sqrt{\frac{(2p+1)\gamma-1}{2p}}\quad (z\in \mathbb{D}),$$ then $$\RE \left(\frac{f(z)}{z^{p}}\right)>\gamma \quad (z\in \mathbb{D}).$$
\end{theorem}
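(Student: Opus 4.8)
The plan is to transfer the hypothesis on $\sqrt{f'(z)/(pz^{p-1})}$ to the function
\[
q(z)=\frac{1}{1-\gamma}\left(\frac{f(z)}{z^{p}}-\gamma\right)
\]
and then apply Lemma~\ref{MM} to deduce $\RE q(z)>0$, which is exactly $\RE(f(z)/z^{p})>\gamma$. Since $f$ is locally $p$-valent, the quantity $f'(z)/(pz^{p-1})$ is analytic and non-vanishing with value $1$ at the origin, so the branch with $\sqrt{1}=1$ makes $\sqrt{f'(z)/(pz^{p-1})}$ a well-defined analytic function, and clearly $q\in\mathcal{H}[1,1]$. Writing $f(z)=z^{p}((1-\gamma)q(z)+\gamma)$ and differentiating yields
\[
\frac{f'(z)}{pz^{p-1}}=(1-\gamma)q(z)+\gamma+\frac{(1-\gamma)zq'(z)}{p}.
\]

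Next I would set $\beta=\sqrt{\frac{(2p+1)\gamma-1}{2p}}$, which is positive because $\gamma>(p^{2}+1)/(p+1)^{2}>1/(2p+1)$, put $\Omega=\{w\in\mathbb{C}:\RE w>\beta\}$, and define
\[
\psi(r,s)=\sqrt{(1-\gamma)r+\gamma+\frac{(1-\gamma)s}{p}},
\]
again with the branch taking value $1$ at $(r,s)=(1,0)$, which agrees with the branch chosen for $\sqrt{f'(z)/(pz^{p-1})}$. With this notation the displayed identity reads $\psi(q(z),zq'(z))=\sqrt{f'(z)/(pz^{p-1})}$, so the hypothesis says precisely that $\psi(q(z),zq'(z))\in\Omega$ for all $z\in\mathbb{D}$. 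It then remains to verify the admissibility condition of Lemma~\ref{MM}: that $\RE\psi(i\rho,\sigma)\le\beta$ whenever $\rho\in\mathbb{R}$ and $\sigma\le-(1+\rho^{2})/2$.

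To check admissibility I would write the radicand as $A+iB$ with $A=\gamma+(1-\gamma)\sigma/p$ and $B=(1-\gamma)\rho$, and use $\RE\sqrt{A+iB}=\sqrt{(\sqrt{A^{2}+B^{2}}+A)/2}$. This expression is increasing in $A$, and $A$ increases with $\sigma$, so its largest value occurs at $\sigma=-(1+\rho^{2})/2$; it therefore suffices to treat this worst case. There the inequality $\RE\sqrt{A+iB}\le\beta$ is equivalent, after squaring twice, to $B^{2}\le 4\beta^{2}(\beta^{2}-A)$. A short computation then reveals the clean cancellation $\beta^{2}-A=(1-\gamma)\rho^{2}/(2p)$, so for $\rho\neq0$ the required inequality collapses to $1-\gamma\le 2\beta^{2}/p$, which rearranges to exactly $\gamma\ge(p^{2}+1)/(p+1)^{2}$. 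Thus the hypothesis on $\gamma$ is precisely what forces admissibility, and Lemma~\ref{MM} delivers $\RE q(z)>0$.

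The differentiation and the algebra are routine; the step demanding genuine care is the admissibility verification, owing to the square root in $\psi$. The main obstacle is to handle $\RE\sqrt{\,\cdot\,}$ correctly: one must confirm that the branch used in defining $\psi$ coincides with the analytic square root in the hypothesis (they agree since both equal $1$ at the origin and the hypothesis confines the values to the right half-plane), and then extract the sharp identity $\RE\sqrt{A+iB}=\sqrt{(\sqrt{A^{2}+B^{2}}+A)/2}$ and exploit the cancellation $\beta^{2}-A=(1-\gamma)\rho^{2}/(2p)$ that makes the threshold $(p^{2}+1)/(p+1)^{2}$ emerge exactly.
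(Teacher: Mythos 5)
Your proposal is correct, and its scaffolding coincides with the paper's: the same function $q(z)=\frac{1}{1-\gamma}\bigl(f(z)/z^{p}-\gamma\bigr)$, the same identity expressing $f'(z)/(pz^{p-1})$ in terms of $q(z)$ and $zq'(z)$, the same admissible function $\psi$, and the same appeal to Lemma \ref{MM}. The genuine difference is in how you verify admissibility, which is the heart of the matter. The paper substitutes $\sigma=-(1+\rho^{2})/2$, dominates $\xi+\sqrt{\xi^{2}+\eta^{2}}$ by a one-variable function $h(\eta)$, and applies the second derivative test ($h'(0)=0$, $h''(0)<0$) to conclude that $h$ is maximized at $\eta=0$ with $h(0)=((2p+1)\gamma-1)/p$; besides requiring an unpleasant computation of $h'$ and $h''$, this literally certifies only a \emph{local} maximum, so the paper's argument is complete only modulo a further check of $h$ away from the origin (for instance $h(\eta)\to p(1-\gamma)$ as $\eta\to\infty$, which is at most $h(0)$ exactly when $\gamma\ge(p^{2}+1)/(p+1)^{2}$). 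Your route avoids calculus altogether: monotonicity in $A$ of $\RE\sqrt{A+iB}=\sqrt{(\sqrt{A^{2}+B^{2}}+A)/2}$ reduces to the extreme value of $\sigma$; squaring twice turns $\RE\sqrt{A+iB}\le\beta$ into $B^{2}\le 4\beta^{2}(\beta^{2}-A)$ (a valid equivalence, since that inequality forces $A\le\beta^{2}<2\beta^{2}$, so both squarings reverse); and the cancellation $\beta^{2}-A=(1-\gamma)\rho^{2}/(2p)$ collapses the requirement to $1-\gamma\le 2\beta^{2}/p$, i.e.\ $\gamma\ge(p^{2}+1)/(p+1)^{2}$. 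This is rigorous precisely where the paper's calculus step is soft, and it makes transparent why that threshold appears; your explicit handling of the branch of the square root is likewise a point the paper leaves implicit. For completeness you should record the trivial case $\rho=0$ (both sides of $B^{2}\le 4\beta^{2}(\beta^{2}-A)$ vanish, so $\RE\psi(0,\sigma)\le\beta$) and that $\gamma<1$ justifies dividing by $1-\gamma$; neither affects correctness. Incidentally, the same algebraic scheme would also streamline the fixed-coefficient analogue, Theorem \ref{th3.7}, where the paper again relies on the second-derivative-test template.
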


\begin{proof}
If we define the function $q:\mathbb{D}\to \mathbb{C}$ by
\[q(z)=\frac{1}{1-\gamma}\left(\frac{f(z)}{z^p}-\gamma\right)=1+\frac{a_{p+1}}{1-\gamma}z+\cdots,\]
then $q\in \mathcal{H}[1,1]$ and satisfies
\[\sqrt{\frac{f'(z)}{pz^{p-1}}}=\sqrt{(1-\gamma)q(z)+\gamma+(1-\gamma)\frac{zq'(z)}{p}}\]
for all $z\in \mathbb{D}$. Define the function $\psi:\mathbb{C}^2\to\mathbb{C}$ by
\[\psi (r,s)=\sqrt{(1-\gamma)r+\gamma+(1-\gamma)\frac{s}{p}}.\]
If we let $\rho\in\mathbb{R}$ and $\sigma \leq -(1+\rho^2)/2$, then $\psi(i\rho,\sigma):=\sqrt{\zeta}=\sqrt{\xi+i\eta}$ where $\xi=\gamma+(1-\gamma)\sigma/p$ and $\eta=(1-\gamma)\rho$. This gives
\begin{align*}
\xi=\gamma+(1-\gamma)\frac{\sigma}{p}&\leq \gamma-\frac{1-\gamma}{2p}(1+\rho^2)\\
&= \gamma-\frac{1-\gamma}{2p}\left(1+\frac{\eta^2}{(1-\gamma)^2}\right)\\
&=\gamma-\frac{(1-\gamma)^2+\eta^2}{2p(1-\gamma)}.
\end{align*}
A direct computation leads to
\[\xi+\sqrt{\xi^2+\eta^2}\leq \gamma-\frac{(1-\gamma)^2+\eta^2}{2p(1-\gamma)}+\sqrt{\left(\gamma-\frac{(1-\gamma)^2+\eta^2}{2p(1-\gamma)}\right)^2+\eta^2}:=h(\eta).\]
To find the maximum value of function $h$, we shall apply the second derivative test. To see this, observe that
\[h'(\eta)=-\frac{\eta}{p(1-\gamma)}+\frac{\eta((1-\gamma)(1-(2p+1)\gamma+2p^2(1-\gamma))+\eta^2)}{2p^2(1-\gamma)^2\sqrt{\left(\gamma-\frac{(1-\gamma)^2+\eta^2}{2p(1-\gamma)}\right)^2+\eta^2}}\]
so that $h'(0)=0$ and since \[\gamma>\frac{p^2+1}{(p+1)^2}>\frac{1}{2p+1}\]
we must have $(2p+1)\gamma-1>0$ and
\[h''(0)=\frac{2(1 - (2 p + 1)\gamma + p^2 (1 - \gamma))}{p(1-\gamma)((2 p + 1)\gamma-1)}.\]
Using the bounds on $\gamma$, one can see that the numerator of $h''(0)$ is negative while the  denominator of $h''(0)$ is positive and hence $h''(0)<0$. Thus the function $h$ attains its maximum at $\eta=0$ and
\[\xi+\sqrt{\xi^2+\eta^2}\leq h(\eta)\leq h(0)=\frac{(2p+1)\gamma-1}{p}\]
which implies that
\[\RE\psi(i\rho,\sigma)=\RE \sqrt{\zeta}=\sqrt{\frac{\xi+\sqrt{\xi^2+\eta^2}}{2}}\leq \sqrt{\frac{(2p+1)\gamma-1}{2p}}.\]
By Lemma \ref{MM}, it follows that $\RE q(z)>0$ for all $z\in \mathbb{D}$.
\end{proof}

\begin{theorem}\label{th2.5}
Let $p\in\mathbb{N}$. If $f\in\mathcal{A}_{p}$ is locally $p$-valent and $$\RE\sqrt{\frac{f'(z)}{pz^{p-1}}}>\frac{\sqrt{p}}{2}\quad (z\in \mathbb{D}),$$ then $$\RE \left(\frac{f(z)}{z^{p}}\right)>\frac{1}{2} \quad (z\in \mathbb{D}).$$
\end{theorem}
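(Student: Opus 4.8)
The plan is to reuse verbatim the reduction from the proof of Theorem~\ref{th2.4} with the boundary value $\gamma=1/2$, and then to replace its second derivative test by a single global estimate. Note first that this value lies \emph{outside} the range treated in Theorem~\ref{th2.4}: since $(p^2+1)/(p+1)^2-1/2=(p-1)^2/(2(p+1)^2)\ge 0$, one has $(p^2+1)/(p+1)^2\ge 1/2$ for every $p$, so $\gamma=1/2$ is never admissible there and a separate argument is genuinely needed. Setting
\[q(z)=\frac{1}{1-\gamma}\left(\frac{f(z)}{z^p}-\gamma\right)=\frac{2f(z)}{z^p}-1,\]
we get $q\in\mathcal{H}[1,1]$ and, exactly as before,
\[\sqrt{\frac{f'(z)}{pz^{p-1}}}=\sqrt{\tfrac12\,q(z)+\tfrac12+\frac{zq'(z)}{2p}}.\]
I would therefore take $\psi(r,s)=\sqrt{\tfrac12 r+\tfrac12+s/(2p)}$ and $\Omega=\{w:\RE w>\sqrt{p}/2\}$; the hypothesis says $\psi(q(z),zq'(z))\in\Omega$, and the goal $\RE(f(z)/z^p)>1/2$ is precisely $\RE q(z)>0$, to be obtained from Lemma~\ref{MM}.

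Next I would verify the admissibility condition. For $\rho\in\mathbb{R}$ and $\sigma\le-(1+\rho^2)/2$ write $\psi(i\rho,\sigma)=\sqrt{\xi+i\eta}$ with $\xi=\tfrac12+\sigma/(2p)$ and $\eta=\rho/2$; the bound on $\sigma$ gives
\[\xi\le \frac12-\frac{1}{4p}-\frac{\eta^2}{p}=:A.\]
Since $r\mapsto r+\sqrt{r^2+\eta^2}$ is increasing, the standard formula $\RE\sqrt{\xi+i\eta}=\sqrt{(\xi+\sqrt{\xi^2+\eta^2})/2}$ reduces the admissibility condition $\RE\psi(i\rho,\sigma)\le\sqrt{p}/2$ to showing
\[A+\sqrt{A^2+\eta^2}\le \frac{p}{2}.\]

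Here lies the only real point of departure, and the step I expect to be the crux. Unlike in Theorem~\ref{th2.4}, the second derivative test is useless at $\gamma=1/2$: a computation gives $h''(0)=4(p-1)^2/(p(2p-1))\ge0$ for $p\ge 2$, so $\eta=0$ is a local \emph{minimum} of $h(\eta)=A+\sqrt{A^2+\eta^2}$ and the maximum is pushed off to infinity. Instead I would prove the displayed bound directly: as $A<p/2$, it is equivalent to $\sqrt{A^2+\eta^2}\le p/2-A$, and squaring turns this into $\eta^2\le p^2/4-pA$; substituting $A$ collapses everything to $0\le (p-1)^2/4$, which holds for all $\eta$. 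Thus $\RE\psi(i\rho,\sigma)\le\sqrt{p}/2$, so $\psi(i\rho,\sigma)\notin\Omega$, and Lemma~\ref{MM} yields $\RE q(z)>0$. This estimate also explains the threshold in the statement: the supremum $\sqrt{p}/2$ is approached only as $\rho\to\infty$ rather than at $\rho=0$, so the sharp constant is $\sqrt{p}/2$ and not the smaller value $\sqrt{(2p-1)/(4p)}$ that the $\eta=0$ evaluation in Theorem~\ref{th2.4} would suggest.
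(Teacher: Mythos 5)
Your proof is correct, and structurally it is the paper's own proof: the same function $q(z)=2f(z)/z^p-1$, the same $\psi$, the same parametrization $\psi(i\rho,\sigma)=\sqrt{\xi+i\eta}$ with $\xi=(\sigma+p)/(2p)$, $\eta=\rho/2$, and Lemma \ref{MM} to conclude. The difference lies in the closing estimate, and yours is in fact the more rigorous of the two. The paper asserts $\xi^2+\eta^2\le(1+4\eta^2)((2p-1)^2+4\eta^2)/(16p^2)$ --- which is exactly $A^2+\eta^2$ for your $A$ --- and then finishes with the AM--GM inequality; but $\xi\le A$ alone does not yield $\xi^2\le A^2$, and the asserted bound actually fails when $\sigma$ is very negative (then $\xi\to-\infty$ and $\xi^2+\eta^2$ is unbounded). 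What makes the paper's conclusion nevertheless valid is precisely the monotonicity of $r\mapsto r+\sqrt{r^2+\eta^2}$ that you invoke explicitly, which lets one replace $\xi$ by $A$ inside $\RE\sqrt{\xi+i\eta}$ before estimating; your subsequent direct squaring (legitimate since $A<p/2$), which collapses $A+\sqrt{A^2+\eta^2}\le p/2$ to $0\le(p-1)^2/4$, then does in one line what the paper does with the product factorization plus AM--GM, and with no unjustified step. Your framing remarks are also accurate: $\gamma=1/2$ is excluded from Theorem \ref{th2.4} because $(p^2+1)/(p+1)^2\ge 1/2$; the second-derivative test of that proof indeed gives $h''(0)=4(p-1)^2/(p(2p-1))\ge 0$ at $\gamma=1/2$, so it cannot be recycled; and the supremum $p/2$ of $A+\sqrt{A^2+\eta^2}$ is approached only as $\eta\to\infty$ (for $p=1$ the function is identically $1/2$), which explains why the constant $\sqrt{p}/2$ cannot be lowered by this method even though the pointwise value at $\rho=0$ is smaller.
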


\begin{proof}
Define the function $q:\mathbb{D}\to\mathbb{C}$ by
\[q(z)=\frac{2f(z)}{z^{p}}-1=1+2a_{p+1}z+\cdots\]
A direct computation gives
\[\sqrt{\frac{f'(z)}{pz^{p-1}}}=\sqrt{\frac{1}{2}\left(q(z)+\frac{zq'(z)}{p}+1\right)}.\]
The function $\psi:\mathbb{C}^2\to\mathbb{C}$ defined by
$$\psi(r,s)=\sqrt{\frac{1}{2}\left(r+\frac{s}{p}+1\right)}$$
satisfies $ \RE \psi(q(z),zq'(z))>\sqrt{p}/2$ for all $z\in \mathbb{D}$. To apply Lemma \ref{MM}, let $\rho\in\mathbb{R}$ and $\sigma \leq -(1+\rho^2)/2$. Then
\[\psi(i\rho,\sigma)=\sqrt{\frac{1}{2}\left(i\rho+\frac{\sigma}{p}+1\right)}:=\sqrt{\zeta}=\sqrt{\xi+i\eta}\]
where $\xi=(\sigma+p)/(2p)$ and $\eta=\rho/2$. Using the conditions on $\rho$ and $\sigma$, we have
\[\xi=\frac{1}{2}\left(\frac{\sigma}{p}+1\right)\leq \frac{1}{2}\left(-\frac{1}{2p}(1+\rho^2)+1\right)=\frac{1}{2}\left(-\frac{1}{2p}(1+4\eta^2)+1\right)\]
A lengthy calculation shows that
\[\xi^2+\eta^2\leq\frac{(1+4\eta^2)((2p-1)^2+4\eta^2)}{16p^2}\]
so that
\[\sqrt{\xi^2+\eta^2}\leq \frac{1}{4p}\sqrt{(1+4\eta^2)(1-4p+4p^2+4\eta^2)}\leq \frac{1-2p+2p^2+4\eta^2}{4p}\]
using the relation between geometric and arithmetic mean. Now
\begin{align*}
\RE\psi(i\rho,\sigma)&=\RE \sqrt{\zeta}=\sqrt{\frac{\xi+\sqrt{\xi^2+\eta^2}}{2}}\\
   &\leq\sqrt{\frac{1}{2}\left(\frac{2p-1-4\eta^2}{4p}+\frac{1-2p+2p^2+4\eta^2}{4p}\right)}=\frac{\sqrt{p}}{2}
\end{align*}
Therefore the proof is completed by invoking Lemma \ref{MM}.
\end{proof}

\begin{remark}
The second differential implication of \eqref{eq3} corresponds to the case $p=1$ of Theorem \ref{th2.5}.
\end{remark}

 \section{Marx-Strohh\"{a}cker Type Results With Fixed Coefficient}
 This section is devoted to prove the Marx-Strohh\"{a}cker results \eqref{eq2} and \eqref{eq4} for functions with fixed coefficient. As pointed out earlier, Selvaraj and Stelin \cite{SELVARAJ} proved these results under milder conditions. They used Lemma \ref{ALI} inappropriately, in particular, the admissibility condition involving $\sigma$ and $\rho$ was wrongly written which led to incorrect results. The first theorem of this section generalizes \cite[Theorem 1, p.\ 355]{NUNO2} for functions with fixed coefficient.

 \begin{theorem}\label{thrm1}
 Let $p\in\mathbb{N}$, $0\leq \beta<p$ and $0\leq b\leq 2(p-\beta).$ Let $$\alpha=\alpha(\beta,p,b):=\begin{cases}
 		\beta\left(1-\displaystyle\frac{2}{2(p-\beta)+b}\right), & 0\leq\beta<\displaystyle\frac{p}{2},\\
 		\\
 		\beta\left(1-\displaystyle\frac{2(p-\beta)^2}{\beta^2(2(p-\beta)+b)}\right), & \displaystyle\frac{p}{2}\leq \beta<p.
 		\end{cases}$$
If $f\in\mathcal{A}_{p,b}$ satisfies $$\RE\left(1+\frac{zf''(z)}{f'(z)}\right)>\alpha\quad (z\in\disc),$$ then $$\RE\left(\frac{zf'(z)}{f(z)}\right)>\beta \quad (z\in\disc).$$
 	\end{theorem}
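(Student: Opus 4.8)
The plan is to mirror the proof of Theorem~\ref{th2.2}, but now invoke the fixed-coefficient Lemma~\ref{ALI} in place of Lemma~\ref{MM}. First I would introduce the auxiliary function
\[q(z)=\frac{1}{p-\beta}\left(\frac{zf'(z)}{f(z)}-\beta\right),\]
which is analytic in $\disc$ with $q(0)=1$, since $zf'(z)/f(z)\to p$ as $z\to 0$. The crucial bookkeeping step is to read off the second Taylor coefficient of $q$: expanding $f(z)=z^p(1+bz+\cdots)$ gives $zf'(z)/f(z)=p+bz+\cdots$, so $q(z)=1+\frac{b}{p-\beta}\,z+\cdots$. Hence $q\in\mathcal{H}_\zeta[1,1]$ with $\zeta=b/(p-\beta)$, and the hypothesis $0\le b\le 2(p-\beta)$ is exactly what guarantees $0\le\zeta\le 2$, placing us in the admissible range of Lemma~\ref{ALI}.

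Next I would translate the convexity quantity into a function of $(q,zq')$ via the logarithmic-derivative identity. Writing $P(z)=zf'(z)/f(z)=(p-\beta)q(z)+\beta$ and differentiating $\log P=\log z+\log f'-\log f$ yields
\[1+\frac{zf''(z)}{f'(z)}=P(z)+\frac{zP'(z)}{P(z)}=(p-\beta)q(z)+\beta+\frac{(p-\beta)zq'(z)}{(p-\beta)q(z)+\beta}.\]
This suggests defining
\[\psi(r,s)=(p-\beta)r+\beta+\frac{(p-\beta)s}{(p-\beta)r+\beta}\]
on $\mathbb{C}^2$ minus the singular set, together with $\Omega=\{w:\RE w>\alpha\}$, so that the hypothesis reads $\psi(q(z),zq'(z))\in\Omega$ for all $z\in\disc$. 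Proving $\RE q>0$, equivalently $\RE(zf'(z)/f(z))>\beta$, then reduces to verifying the admissibility condition of Lemma~\ref{ALI}.

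For the admissibility check, take $\rho\in\mathbb{R}$ and $\sigma\le-\frac{2}{2+\zeta}(1+\rho^2)$. A direct real-part computation gives
\[\RE\psi(i\rho,\sigma)=\beta+\frac{\beta(p-\beta)\sigma}{\beta^2+(p-\beta)^2\rho^2}\le\beta-\frac{2(p-\beta)}{2(p-\beta)+b}\,\beta(p-\beta)\,\phi(\rho^2,p,\beta),\]
where I have used $\frac{2}{2+\zeta}=\frac{2(p-\beta)}{2(p-\beta)+b}$ and recognized $\frac{1+\rho^2}{(p-\beta)^2\rho^2+\beta^2}=\phi(\rho^2,p,\beta)$ in the notation of Lemma~\ref{lem}. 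Applying Lemma~\ref{lem} to minimize $\phi$ over $\rho^2\in[0,\infty)$ produces precisely the two regimes $0\le\beta<p/2$ and $p/2\le\beta<p$, and in each the resulting upper bound collapses to the stated piecewise value of $\alpha$. Thus $\RE\psi(i\rho,\sigma)\le\alpha$, i.e.\ $\psi(i\rho,\sigma)\notin\Omega$, and Lemma~\ref{ALI} finishes the proof.

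The main obstacle---and precisely the point where \cite{SELVARAJ} erred---is carrying the correct admissibility bound $\sigma\le-\frac{2}{2+\zeta}(1+\rho^2)$ through with $\zeta=b/(p-\beta)$, rather than an ordinary Miller--Mocanu bound; getting the factor $\frac{2(p-\beta)}{2(p-\beta)+b}$ right is what makes the extremal analysis of $\phi$ reproduce the sharp, $b$-dependent value of $\alpha$. The remaining work (the real-part simplification and the two-case minimization) is routine once Lemma~\ref{lem} is in hand.
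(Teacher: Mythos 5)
Your proposal is correct and follows essentially the same route as the paper: the same auxiliary function $q(z)=\frac{1}{p-\beta}\bigl(\frac{zf'(z)}{f(z)}-\beta\bigr)\in\mathcal{H}_{b/(p-\beta)}[1,1]$, the same admissible function $\psi(r,s)=(p-\beta)r+\beta+\frac{(p-\beta)s}{(p-\beta)r+\beta}$, the same admissibility bound $\sigma\leq-\frac{2(p-\beta)}{2(p-\beta)+b}(1+\rho^2)$ from Lemma \ref{ALI}, and the same two-case minimization of $\phi(\rho^2,p,\beta)$ via Lemma \ref{lem} to recover the piecewise value of $\alpha$. No gaps beyond those already present in the paper's own argument.
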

 \begin{proof}
 Define a function $q:\mathbb{D}\to \mathbb{C}$ by $$q(z)=\frac{1}{p-\beta}\left(\frac{zf'(z)}{f(z)}-\beta\right)=1+\frac{b}{p-\beta}z+\cdots.$$
Then $q\in\mathcal{H}_{b/(p-\beta)}[1,1]$. 	By using the equation $zf'(z)/f(z)=(p-\beta)q(z)+\beta$, we get $$1+\frac{zf''(z)}{f'(z)}=\frac{(p-\beta)zq'(z)}{(p-\beta)q(z)+\beta}+(p-\beta)q(z)+\beta.$$
 Let us define the function $\psi:\mathbb{C}^2\setminus\{(-\beta/(p-\beta), 0)\} \to\mathbb{C}$ by $$\psi(r,s)=\frac{(p-\beta)s}{(p-\beta)r+\beta}+(p-\beta)r+\beta.$$  Then $ \psi(q(z),zq'(z))\in \Omega$ for all $z\in \mathbb{D}$, where $\Omega=\{w\in\mathbb{C}:\RE w>\alpha\}$. For  $\rho \in \mathbb{R}$ and
 \[\sigma\leq -\frac{2(p-\beta)}{2(p-\beta)+b}(1+\rho^2),\]
we have
\begin{align*}
\RE\psi(i\rho,\sigma)&=\beta\left(\frac{(p-\beta)\sigma}{\beta^2+(p-\beta)^2\rho^2}+1\right)\\
 	&\leq \beta\left(\frac{-2(p-\beta)^2(1+\rho^2)}{(\beta^2+(p-\beta)^2\rho^2)(2(p-\beta)+b)}+1\right)\\
 &= \beta\left(\frac{-2(p-\beta)^2}{2(p-\beta)+b}\phi(\rho^2,p,\beta)+1\right)
 	\end{align*}
 where $\phi$ is defined by \eqref{eq}. Lemma \ref{lem} shows that
 \[\min_{\rho\in\mathbb{R}} \phi(\rho^2,p,\beta)=\begin{cases}
	\displaystyle\frac{1}{(p-\beta)^2}, & 0 \leq\beta\leq \displaystyle\frac{p}{2},\\
	\\
	\displaystyle\frac{1}{\beta^2}, & \displaystyle\frac{p}{2}\leq \beta<p.
	\end{cases}\]
A calculation using this bound shows that
\[\RE\psi(i\rho,\sigma)\leq\beta\left(1-\frac{2}{2(p-\beta)+b}\right)\]
if $0\leq \beta<p/2$ and
  \[\RE\psi(i\rho,\sigma)\leq \beta\left(1-\frac{2(p-\beta)^2}{\beta^2(2(p-\beta)+b)}\right)\]
if $p/2\leq \beta <p$.
Therefore, we conclude that $\RE\psi(i\rho,\sigma)\leq \alpha$ and therefore by Lemma \ref{ALI}, $\RE q(z)>0$ or $\RE (zf'(z)/f(z))>\beta$ for all $z\in \mathbb{D}$.
 \end{proof}

For $b=2(p-\beta)$, Theorem \ref{thrm1} reduces to \cite[Theorem 1, p.\ 355]{NUNO2}. If $p=1$ and $\beta =1/2$, then Theorem \ref{thrm2} yields \cite[Theorem 2.2, p.\ 228]{NAGPAL}. The correct version of \cite[Theorem 2.1, p.\ 886]{SELVARAJ} is contained in the following corollary.

\begin{corollary}
If $f\in\mathcal{A}_{p,b}$, $0\leq b\leq p$ satisfies $$\RE\left(1+\frac{zf''(z)}{f'(z)}\right)>\frac{p}{2}\left(1-\frac{2}{p+b}\right)\quad (z\in\disc),$$ then $$\RE\left(\frac{zf'(z)}{f(z)}\right)>\frac{p}{2} \quad (z\in\disc).$$
\end{corollary}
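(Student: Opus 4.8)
The plan is to obtain this corollary as the special case $\beta=p/2$ of Theorem \ref{thrm1}, so that essentially no new work is required beyond a substitution. First I would set $\beta=p/2$ in the theorem and check that the admissibility constraint $0\leq b\leq 2(p-\beta)$ collapses to exactly $0\leq b\leq p$, which is precisely the range on $b$ assumed in the corollary. Since $0\leq \beta<p$ is also satisfied by $\beta=p/2$ whenever $p\in\mathbb{N}$, all hypotheses of Theorem \ref{thrm1} are met.

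The one point that genuinely needs verification is that the threshold $\alpha=\alpha(\beta,p,b)$ is well defined at $\beta=p/2$, because this value sits exactly on the boundary separating the two branches of the piecewise definition. I would therefore evaluate both branches at $\beta=p/2$: in the first branch $\alpha=\frac{p}{2}\bigl(1-\frac{2}{2(p-\beta)+b}\bigr)=\frac{p}{2}\bigl(1-\frac{2}{p+b}\bigr)$, and in the second branch, using $p-\beta=\beta=p/2$ so that $(p-\beta)^2=\beta^2$, the correction factor $\frac{2(p-\beta)^2}{\beta^2(2(p-\beta)+b)}$ simplifies to $\frac{2}{p+b}$, giving the same value $\frac{p}{2}\bigl(1-\frac{2}{p+b}\bigr)$. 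Thus both cases coincide and the threshold is unambiguously $\frac{p}{2}\bigl(1-\frac{2}{p+b}\bigr)$, which matches the lower bound imposed on $\RE\bigl(1+zf''(z)/f'(z)\bigr)$ in the statement.

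With these identifications in place, the hypothesis of the corollary is literally the hypothesis of Theorem \ref{thrm1} with $\beta=p/2$, and its conclusion $\RE\bigl(zf'(z)/f(z)\bigr)>\beta=p/2$ is the desired conclusion. I would then simply invoke Theorem \ref{thrm1} to finish. I do not anticipate any real obstacle here; the only step requiring care is the boundary check described above, confirming that the two-case formula for $\alpha$ agrees at $\beta=p/2$ and reduces to the stated constant.
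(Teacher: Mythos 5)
Your proposal is correct and matches the paper exactly: the corollary is stated there as the immediate specialization $\beta=p/2$ of Theorem \ref{thrm1}, with no separate proof given. Your boundary check that both branches of $\alpha(\beta,p,b)$ agree at $\beta=p/2$ (each reducing to $\frac{p}{2}\bigl(1-\frac{2}{p+b}\bigr)$) is careful and accurate, though strictly speaking $\beta=p/2$ falls unambiguously in the second branch of the theorem's piecewise definition.
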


The next theorem determines a lower bound of $\RE (f(z)/z^p)$ over the unit disk $\disc$ for functions $f\in\mathcal{A}_{p,b}$ that are $p$-valent starlike of order $\beta$.
\begin{theorem}\label{thrm2}
Let $p\in\mathbb{N}$, $0<\gamma<1$ and $0\leq b\leq 2(1-\gamma)$. Also, let $$\beta=\beta(\gamma,p,b)=\begin{cases}
	p-\displaystyle\frac{2\gamma}{2(1-\gamma)+b},& 0<\gamma\leq\displaystyle\frac{1}{2},\\
	\\
	p-\displaystyle\frac{2(1-\gamma)^2}{\gamma(2(1-\gamma)+b)},& \displaystyle\frac{1}{2}\leq \gamma<1.
	\end{cases}$$
If the function $f\in\mathcal{A}_{p,b}$ satisfies $$\RE\left(\frac{zf'(z)}{f(z)}\right)>\beta\quad (z\in \mathbb{D}),$$ then $$\RE\left(\frac{f(z)}{z^p}\right)>\gamma\quad (z\in \mathbb{D}).$$
\end{theorem}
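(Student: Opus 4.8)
The plan is to run the same differential-subordination machinery used in the proof of Theorem~\ref{thrm1}, but now reducing $f(z)/z^p$ to a function with positive real part and expressing $zf'(z)/f(z)$ through it. First I would set
\[q(z)=\frac{1}{1-\gamma}\left(\frac{f(z)}{z^p}-\gamma\right),\]
so that $f(z)/z^p=(1-\gamma)q(z)+\gamma$. Since $f\in\mathcal{A}_{p,b}$ has $a_{p+1}=b$, the function $q$ expands as $q(z)=1+\frac{b}{1-\gamma}z+\cdots$, hence $q\in\mathcal{H}_\zeta[1,1]$ with fixed second coefficient $\zeta=b/(1-\gamma)$; the hypothesis $0\le b\le 2(1-\gamma)$ is precisely what places $\zeta$ in the admissible range $0<\zeta\le 2$ of Lemma~\ref{ALI} (the boundary case $b=0$ being the classical one). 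Writing $f(z)=z^p((1-\gamma)q(z)+\gamma)$ and logarithmically differentiating yields the key identity
\[\frac{zf'(z)}{f(z)}=p+\frac{(1-\gamma)zq'(z)}{(1-\gamma)q(z)+\gamma}.\]

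Next I would introduce $\psi(r,s)=p+\dfrac{(1-\gamma)s}{(1-\gamma)r+\gamma}$ and $\Omega=\{w\in\mathbb{C}:\RE w>\beta\}$, so that the hypothesis $\RE(zf'(z)/f(z))>\beta$ reads $\psi(q(z),zq'(z))\in\Omega$ for all $z\in\disc$. To invoke Lemma~\ref{ALI} I must verify the admissibility condition: for $\rho\in\mathbb{R}$ and $\sigma\le-\frac{2}{2+\zeta}(1+\rho^2)=-\frac{2(1-\gamma)}{2(1-\gamma)+b}(1+\rho^2)$, computing the real part after rationalizing the denominator $\gamma+i(1-\gamma)\rho$ gives
\[\RE\psi(i\rho,\sigma)=p+\frac{\gamma(1-\gamma)\sigma}{(1-\gamma)^2\rho^2+\gamma^2}\le p-\frac{2\gamma(1-\gamma)^2}{2(1-\gamma)+b}\,\phi(\rho^2,1,\gamma),\]
where $\phi$ is the function of \eqref{eq} with parameters $a=1$ and $b=\gamma$.

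The decisive step is to minimize $\phi(\rho^2,1,\gamma)$ over $\rho\in\mathbb{R}$, which is exactly what Lemma~\ref{lem} supplies: the minimum equals $1/(1-\gamma)^2$ when $0\le\gamma\le 1/2$ and $1/\gamma^2$ when $1/2\le\gamma<1$. Substituting these two values collapses the bound to $p-\frac{2\gamma}{2(1-\gamma)+b}$ and $p-\frac{2(1-\gamma)^2}{\gamma(2(1-\gamma)+b)}$ respectively, which are precisely the two branches defining $\beta(\gamma,p,b)$. Hence $\RE\psi(i\rho,\sigma)\le\beta$, so $\psi(i\rho,\sigma)\notin\Omega$, and Lemma~\ref{ALI} forces $\RE q(z)>0$, i.e.\ $\RE(f(z)/z^p)>\gamma$ on $\disc$. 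I do not anticipate a genuine obstacle, since the argument parallels Theorem~\ref{thrm1} almost verbatim with the pair $(p-\beta,\beta)$ replaced by $(1-\gamma,\gamma)$; the only points demanding care are carrying the correct admissibility constant $\frac{2(1-\gamma)}{2(1-\gamma)+b}$ arising from the fixed coefficient $\zeta=b/(1-\gamma)$ (this is exactly where the admissibility inequality must not be mishandled, as happened in \cite{SELVARAJ}) and aligning the threshold $\gamma=1/2$ of Lemma~\ref{lem} with the case split in the definition of $\beta$.
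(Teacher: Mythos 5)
Your proposal is correct and follows essentially the same route as the paper: the same substitution $q(z)=\frac{1}{1-\gamma}\bigl(\frac{f(z)}{z^p}-\gamma\bigr)\in\mathcal{H}_{b/(1-\gamma)}[1,1]$, the same admissible function $\psi(r,s)=p+\frac{(1-\gamma)s}{(1-\gamma)r+\gamma}$, the same bound on $\RE\psi(i\rho,\sigma)$ via Lemma \ref{lem} applied to $\phi(\rho^2,1,\gamma)$, and the same conclusion through Lemma \ref{ALI}. The only notable addition is your explicit remark that $0\le b\le 2(1-\gamma)$ places $\zeta=b/(1-\gamma)$ in the admissible range of Lemma \ref{ALI}, a point the paper leaves implicit.
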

\begin{proof}
The function $q:\mathbb{D}\to \mathbb{C}$ defined by $$q(z)=\frac{1}{1-\gamma}\left(\frac{f(z)}{z^p}-\gamma\right)=1+\frac{b}{1-\gamma}z+\cdots$$ belongs to the class $\mathcal{H}_{b/(1-\gamma)}[1,1].$ The equation shows that $f(z)=((1-\gamma)q(z)+\gamma)z^p$ gives $$\frac{zf'(z)}{f(z)}=\frac{(1-\gamma)zq'(z)}{(1-\gamma)q(z)+\gamma}+p.$$
	If we define the function $\psi:D\to\mathbb{C}$ where $D=\mathbb{C}^2\setminus\{(-\gamma/(1-\gamma),0)\}$ by $$\psi(r,s)=\frac{(1-\gamma)s}{(1-\gamma)r+\gamma}+p,$$
then it satisfies $\RE\psi(q(z),zq'(z))>\beta$ for $z\in \mathbb{D}$. To apply Lemma \ref{ALI}, note that
	\begin{align*}
	\RE\psi(i\rho,\sigma) &=\frac{\gamma(1-\gamma)\sigma}{(1-\gamma)^2\rho^2+\gamma^2}+p\\
	&\leq \frac{-2\gamma(1-\gamma)^2(1+\rho^2)}{(2(1-\gamma)+b)((1-\gamma)^2\rho^2+\gamma^2)}+p\\
&= \frac{-2\gamma(1-\gamma)^2}{(2(1-\gamma)+b)}\phi(\rho^2,1,\gamma)+p
	\end{align*}
where $\phi$ is defined by \eqref{eq}, $\rho\in \mathbb{R}$ and
\[\sigma\leq -\frac{2(1-\gamma)}{(2(1-\gamma)+b)}(1+\rho^2).\]
By Lemma \ref{lem}, it is easy to see that
\[\min_{\rho\in\mathbb{R}} \phi(\rho^2,1,\gamma)=\begin{cases}
	\displaystyle\frac{1}{(1-\gamma)^2}, & 0 <\gamma\leq\displaystyle\frac{1}{2},\\
	\\
	\displaystyle\frac{1}{\gamma^2}, & \displaystyle\frac{1}{2}\leq \gamma<1.
	\end{cases}\]
so that
\begin{align*}
\RE\psi(i\rho,\sigma)&\leq \frac{-2\gamma(1-\gamma)^2}{(2(1-\gamma)+b)}\min_{\rho\in\mathbb{R}} \phi(\rho^2,1,\gamma)+p\\
&=\begin{cases}
	p-\displaystyle\frac{2\gamma}{2(1-\gamma)+b},& 0<\gamma\leq\displaystyle\frac{1}{2},\\
	\\
	p-\displaystyle\frac{2(1-\gamma)^2}{\gamma(2(1-\gamma)+b)},& \displaystyle\frac{1}{2}\leq \gamma<1.
	\end{cases}
\end{align*}
Hence $\RE (\psi(i\rho,\sigma))\leq \beta$ and therefore $\RE q(z)>0$ for all $z\in \mathbb{D}$ by Lemma \ref{ALI}.
\end{proof}

Theorem \ref{thrm2} reduces to \cite[Theorem 3, p.\ 359]{NUNO2} for $b=2(1-\gamma)$. If $p=1$ and $\beta=1/2,$ then Theorem \ref{thrm2}  reduces to \cite[Theorem 2.6, p.\ 230]{NAGPAL}. The following corollary gives the correct form of \cite[Theorem 2.8, p.\ 890]{SELVARAJ}.

\begin{corollary}
If $f\in\mathcal{A}_{p,b}$, $0\leq b\leq 1$ satisfies $$\RE\left(\frac{zf'(z)}{f(z)}\right)>p-\frac{1}{1+b}\quad (z\in \mathbb{D}),$$ then $$\RE\left(\frac{f(z)}{z^p}\right)>\frac{1}{2}\quad (z\in \mathbb{D}).$$
\end{corollary}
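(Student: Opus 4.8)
The plan is to obtain this statement directly from Theorem~\ref{thrm2} by specializing to $\gamma=1/2$, so that no fresh differential-subordination argument is required. First I would substitute $\gamma=1/2$ into the admissible range $0\leq b\leq 2(1-\gamma)$ appearing in the hypotheses of Theorem~\ref{thrm2}. Since $2(1-\tfrac12)=1$, this range collapses to exactly $0\leq b\leq 1$, which is precisely the condition imposed in the corollary.

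Next I would evaluate the piecewise quantity $\beta(\gamma,p,b)$ of Theorem~\ref{thrm2} at $\gamma=1/2$. Because $\gamma=1/2$ is the meeting point of the two branches, the only thing genuinely worth verifying is that the two expressions agree there. Putting $\gamma=1/2$ in the first branch gives $p-\frac{2(1/2)}{2(1/2)+b}=p-\frac{1}{1+b}$, and putting $\gamma=1/2$ in the second branch gives $p-\frac{2(1/2)^2}{(1/2)(2(1/2)+b)}=p-\frac{1}{1+b}$ as well. Hence $\beta(1/2,p,b)=p-\frac{1}{1+b}$ unambiguously, which is exactly the lower bound appearing in the corollary's hypothesis on $\RE(zf'(z)/f(z))$.

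Finally, the conclusion of Theorem~\ref{thrm2} reads $\RE(f(z)/z^p)>\gamma$, which at $\gamma=1/2$ becomes $\RE(f(z)/z^p)>1/2$, matching the desired conclusion. Since any $f\in\mathcal{A}_{p,b}$ with $0\leq b\leq 1$ satisfies all the hypotheses of Theorem~\ref{thrm2} for this choice of $\gamma$, the corollary follows at once. I expect essentially no obstacle here beyond confirming the boundary consistency of the two branches of $\beta$ at $\gamma=1/2$; the substantive work, namely the application of Lemma~\ref{ALI} and the minimization via Lemma~\ref{lem}, has already been carried out in the proof of Theorem~\ref{thrm2}.
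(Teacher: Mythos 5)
Your proposal is correct and matches the paper's intent exactly: the corollary is stated as an immediate specialization of Theorem~\ref{thrm2} at $\gamma=\tfrac12$, where the coefficient condition becomes $0\leq b\leq 1$ and both branches of $\beta(\gamma,p,b)$ collapse to $p-\frac{1}{1+b}$. Your verification that the two branches agree at the boundary value $\gamma=\tfrac12$ is precisely the only check needed.
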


The last three theorems generalize the results obtained in Section 2. This in turn gives correct versions of the corresponding results of \cite{SELVARAJ} as well.

\begin{theorem}\label{thrm3}
Let $p\in\mathbb{N},$ $0\leq \beta<1$ and $0\leq (p+1)b\leq 4p(1-\beta).$ Let
	\begin{align}
	\alpha=\alpha(\beta,p,b)= \begin{cases}
	p-\displaystyle\frac{8p\beta}{4p(1-\beta)+(p+1)b},& 0\leq \beta\leq\displaystyle\frac{1}{2}\\
	\\
	p-\displaystyle\frac{8p(1-\beta)^2}{\beta(4p(1-\beta)+(p+1)b)},& \displaystyle\frac{1}{2}\leq\beta <1.
	\end{cases}
	\end{align}
If the function $f\in\mathcal{A}_{p,b}$ satisfies $$\RE\left(1+\frac{zf''(z)}{f'(z)}\right)>\alpha\quad (z\in \mathbb{D}),$$ then $$\RE\sqrt{\frac{f'(z)}{pz^{p-1}}}>\beta \quad(z\in \mathbb{D}).$$
\end{theorem}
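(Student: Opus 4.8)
The plan is to run the proof of Theorem \ref{th2.2} essentially verbatim, merely replacing the appeal to Lemma \ref{MM} by its fixed-coefficient analogue, Lemma \ref{ALI}. First I would introduce the same auxiliary function
\[q(z)=\frac{1}{1-\beta}\left(\sqrt{\frac{f'(z)}{pz^{p-1}}}-\beta\right),\]
which is analytic with $q(0)=1$ and satisfies the identical differential identity
\[1+\frac{zf''(z)}{f'(z)}=\frac{2(1-\beta)zq'(z)}{(1-\beta)q(z)+\beta}+p.\]
The one genuinely new ingredient is the second Taylor coefficient of $q$. Since $f\in\mathcal{A}_{p,b}$ gives $f'(z)/(pz^{p-1})=1+\tfrac{(p+1)b}{p}z+\cdots$, taking the principal square root halves this coefficient, so that $q\in\mathcal{H}_\zeta[1,1]$ with
\[\zeta=\frac{(p+1)b}{2p(1-\beta)}.\]

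Next I would check that the hypothesis $0\leq(p+1)b\leq 4p(1-\beta)$ is precisely the condition placing $\zeta$ in the range $0\leq\zeta\leq 2$ demanded by Lemma \ref{ALI}; indeed the upper bound on $b$ is equivalent to $\zeta\leq 2$. (The degenerate case $b=0$ forces $\zeta=0$, whence $q\in\mathcal{H}[1,2]$ and one falls back on Lemma \ref{MM} with $n=2$, whose admissibility bound agrees with the $\zeta\to0^+$ limit.) A short computation then gives
\[\frac{2}{2+\zeta}=\frac{4p(1-\beta)}{4p(1-\beta)+(p+1)b},\]
so the admissibility hypothesis of Lemma \ref{ALI} becomes $\sigma\leq-\tfrac{4p(1-\beta)}{4p(1-\beta)+(p+1)b}(1+\rho^2)$.

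With the same $\psi(r,s)=2(1-\beta)s/((1-\beta)r+\beta)+p$ and $\Omega=\{w\in\mathbb{C}:\RE w>\alpha\}$, the hypothesis yields $\psi(q(z),zq'(z))\in\Omega$. Substituting $r=i\rho$, $s=\sigma$ and inserting the admissibility bound gives, exactly as in Theorem \ref{th2.2},
\[\RE\psi(i\rho,\sigma)\leq-\frac{8p\beta(1-\beta)^2}{4p(1-\beta)+(p+1)b}\,\phi(\rho^2,1,\beta)+p,\]
with $\phi$ as in \eqref{eq}. Lemma \ref{lem} supplies $\min_{\rho}\phi(\rho^2,1,\beta)$ in the two regimes $\beta\leq 1/2$ and $\beta\geq 1/2$, and feeding these minima in reproduces exactly the two branches of $\alpha(\beta,p,b)$. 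Hence $\RE\psi(i\rho,\sigma)\leq\alpha$, so $\psi(i\rho,\sigma)\notin\Omega$, and Lemma \ref{ALI} delivers $\RE q(z)>0$, that is, $\RE\sqrt{f'(z)/(pz^{p-1})}>\beta$ on $\mathbb{D}$.

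The argument is thus largely bookkeeping, and the only step requiring genuine care — which, per the paper's remarks, is exactly where \cite{SELVARAJ} slipped — is pinning down the admissibility threshold. Concretely one must (i) compute $\zeta$ correctly through the square root, remembering the factor $\tfrac12$, and (ii) substitute the resulting bound $\sigma\leq-\tfrac{2}{2+\zeta}(1+\rho^2)$ rather than the unrestricted $\sigma\leq-(1+\rho^2)/2$ of Lemma \ref{MM}. Mishandling either rescales the $(p+1)b$ contribution and produces a different, incorrect $\alpha$, so I would present these two computations explicitly before invoking Lemma \ref{lem}.
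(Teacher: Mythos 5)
Your proof is correct and follows essentially the same route as the paper's: the same $q$ and $\psi$ carried over from Theorem \ref{th2.2}, the same fixed coefficient $\zeta=\frac{(p+1)b}{2p(1-\beta)}$ placing $q$ in $\mathcal{H}_{\zeta}[1,1]$, the same admissibility threshold $\sigma\leq-\frac{4p(1-\beta)}{4p(1-\beta)+(p+1)b}(1+\rho^2)$, and the same combination of Lemmas \ref{lem} and \ref{ALI} to conclude $\RE\psi(i\rho,\sigma)\leq\alpha$. Your explicit handling of the degenerate case $b=0$ (where Lemma \ref{ALI} formally requires $\zeta>0$, so one falls back on Lemma \ref{MM} with $n=2$) is a small point of care that the paper's terse proof passes over silently.
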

\begin{proof}
Using the same definition and notation for the functions $q$ and $\psi$ as defined in Theorem \ref{th2.2}, it is evident that
\[q(z)=1+\frac{(p+1)b}{2p(1-\beta)}z+\cdots \quad (z\in \mathbb{D}),\]
belongs to the class $\mathcal{H}_{\frac{(p+1)b}{2p(1-\beta)}}[1,1].$ For $\rho\in\mathbb{R}$ and
\[\sigma\leq -\frac{4p(1-\beta)}{4p(1-\beta)+(p+1)b}(1+\rho^2),\]
a straight forward calculation shows that
	\begin{align*}
\RE\psi(i\rho,\sigma)&=\frac{2\beta(1-\beta)\sigma}{(1-\beta)^2\rho^2+\beta^2}+p\\
&\leq \frac{-8p\beta(1-\beta)^2(1+\rho^2)}{(4p(1-\beta)+(p+1)b)(1-\beta)^2\rho^2+\beta^2)}+p\\
&=\frac{-8p\beta(1-\beta)^2}{4p(1-\beta)+(p+1)b}\phi(\rho^2,1,\beta)+p
\end{align*}
where $\phi$ is defined by \eqref{eq}. Using Lemmas \ref{lem} and \ref{ALI}, a similar calculation gives the desired result.
\end{proof}

If $(p+1)b= 4p(1-\beta)$, then Theorem \ref{thrm3} reduces to Theorem \ref{th2.2}.  Also, the case $\beta=1/2$ gives the following result which corrects \cite[Theorem 2.5, p.\ 888]{SELVARAJ}.

\begin{corollary}
If $f\in\mathcal{A}_{p,b}$, $0\leq (p+1)b\leq 2p$ satisfies $$\RE\left(1+\frac{zf''(z)}{f'(z)}\right)>p-\frac{4p}{2p+(p+1)b}\quad(z\in \mathbb{D}),$$ then $$\RE\sqrt{\frac{f'(z)}{pz^{p-1}}}>\frac{1}{2}\quad(z\in \mathbb{D}).$$
\end{corollary}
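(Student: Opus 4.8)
The plan is to obtain this corollary as the direct specialization of Theorem \ref{thrm3} to the value $\beta = 1/2$, which is exactly the conclusion bound $\RE\sqrt{f'(z)/(pz^{p-1})} > 1/2$ that we want. First I would set $\beta = 1/2$ in the hypothesis $0 \leq (p+1)b \leq 4p(1-\beta)$ of Theorem \ref{thrm3}; since $4p(1-1/2) = 2p$, this collapses to precisely the range $0 \leq (p+1)b \leq 2p$ stated in the corollary, so no additional constraint on $b$ is introduced and none is lost.

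Next I would compute the threshold $\alpha(\beta,p,b)$ at $\beta = 1/2$. The value $\beta = 1/2$ is the common endpoint of the two cases in the definition of $\alpha$, so I would evaluate both branches and confirm that they agree. For the first branch, $p - 8p\beta/(4p(1-\beta)+(p+1)b)$ becomes $p - 4p/(2p+(p+1)b)$. For the second branch, the numerator $8p(1-\beta)^2$ becomes $8p/4 = 2p$, while the factor $\beta = 1/2$ appearing in the denominator, once cleared, doubles the resulting fraction, so this branch too reduces to $p - 4p/(2p+(p+1)b)$. Since the two expressions coincide, $\alpha(1/2,p,b) = p - 4p/(2p+(p+1)b)$ is well-defined and matches the hypothesis of the corollary verbatim.

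With these substitutions in place, the hypothesis $\RE(1 + zf''(z)/f'(z)) > p - 4p/(2p+(p+1)b)$ of the corollary is identical to the hypothesis of Theorem \ref{thrm3} with $\beta = 1/2$, and the conclusion $\RE\sqrt{f'(z)/(pz^{p-1})} > 1/2$ is exactly the conclusion of that theorem at $\beta = 1/2$. Hence the corollary follows at once by invoking Theorem \ref{thrm3}. There is no genuine obstacle here beyond carrying out the two arithmetic simplifications; the only point requiring a moment's care is verifying that the two branches of $\alpha$ really do agree at the boundary $\beta = 1/2$, so that the specialization is unambiguous and the stated single-formula threshold is legitimate.
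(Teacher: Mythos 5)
Your proposal is correct and matches the paper's own derivation exactly: the paper obtains this corollary by specializing Theorem \ref{thrm3} to $\beta = 1/2$, under which the coefficient constraint becomes $0 \leq (p+1)b \leq 2p$ and both branches of $\alpha(\beta,p,b)$ coincide at $p - \frac{4p}{2p+(p+1)b}$, just as you computed. Your extra check that the two branches agree at the boundary value $\beta = 1/2$ is a sound verification, though the paper leaves it implicit.
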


\begin{theorem}\label{th3.7}
Let $p\in\mathbb{N}$, $\gamma\in (0,1)$ such that $0\leq b\leq 2(1-\gamma)$ and
\begin{equation}\label{eq3.2}
p^2(2(1-\gamma)+b)^2+16(1-\gamma)^2-8p\gamma(2(1-\gamma)+b)<0.
\end{equation}
If $f\in\mathcal{A}_{p,b}$ is locally $p$-valent and
$$\RE\sqrt{\frac{f'(z)}{pz^{p-1}}}>\sqrt{\frac{(4+(2+b)p)\gamma-2(1+p)\gamma^2-2}{p(b+2(1-\gamma))}}:=\beta\quad (z\in \mathbb{D}),$$ then $$\RE \left(\frac{f(z)}{z^{p}}\right)>\gamma \quad (z\in \mathbb{D}).$$
\end{theorem}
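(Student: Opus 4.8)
The plan is to mirror the proof of Theorem \ref{th2.4}, replacing the univalent admissibility Lemma \ref{MM} by its fixed-coefficient counterpart Lemma \ref{ALI}. First I would set
\[q(z)=\frac{1}{1-\gamma}\left(\frac{f(z)}{z^p}-\gamma\right)=1+\frac{b}{1-\gamma}z+\cdots,\]
exactly as in Theorem \ref{thrm2}, so that $q\in\mathcal{H}_{b/(1-\gamma)}[1,1]$. The prescribed second coefficient is therefore $\zeta=b/(1-\gamma)$, and the hypothesis $0\leq b\leq 2(1-\gamma)$ guarantees $0<\zeta\leq 2$ so that Lemma \ref{ALI} is applicable. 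A direct computation yields the same identity used in Theorem \ref{th2.4}, namely $\sqrt{f'(z)/(pz^{p-1})}=\sqrt{(1-\gamma)q(z)+\gamma+(1-\gamma)zq'(z)/p}$; hence with $\psi(r,s)=\sqrt{(1-\gamma)r+\gamma+(1-\gamma)s/p}$ the hypothesis reads $\RE\psi(q(z),zq'(z))>\beta$ on $\disc$.

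Next I would record the admissibility condition of Lemma \ref{ALI}. With $\zeta=b/(1-\gamma)$ the inequality $\sigma\leq-2(1+\rho^2)/(2+\zeta)$ becomes $\sigma\leq-2(1-\gamma)(1+\rho^2)/(2(1-\gamma)+b)$. Writing $\psi(i\rho,\sigma)=\sqrt{\xi+i\eta}$ with $\xi=\gamma+(1-\gamma)\sigma/p$ and $\eta=(1-\gamma)\rho$, this bound gives
\[\xi\leq\gamma-\frac{2((1-\gamma)^2+\eta^2)}{p(2(1-\gamma)+b)},\]
which is precisely the $b$-dependent analogue of the estimate appearing in Theorem \ref{th2.4}. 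As there, I would then bound $\xi+\sqrt{\xi^2+\eta^2}$ above by the function $h(\eta)$ obtained by substituting the right-hand side for $\xi$, and apply the second-derivative test.

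The crux is the maximisation of $h$. Writing $g(\eta)=\gamma-c((1-\gamma)^2+\eta^2)$ with $c=2/(p(2(1-\gamma)+b))$, one has $h=g+\sqrt{g^2+\eta^2}$, $h'(0)=0$, and a short computation gives $h''(0)=-4c+1/g(0)$ with $g(0)=\gamma-2(1-\gamma)^2/(p(2(1-\gamma)+b))$. The condition $h''(0)<0$ is then equivalent, after clearing denominators, to
\[p^2(2(1-\gamma)+b)^2-8p\gamma(2(1-\gamma)+b)+16(1-\gamma)^2<0,\]
which is exactly hypothesis \eqref{eq3.2}; the same inequality also forces $g(0)>0$, since its negativity requires $8p\gamma(2(1-\gamma)+b)>16(1-\gamma)^2$. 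Consequently $h$ attains its maximum at $\eta=0$, so that $\xi+\sqrt{\xi^2+\eta^2}\leq h(0)=2g(0)$ and therefore
\[\RE\psi(i\rho,\sigma)=\sqrt{\frac{\xi+\sqrt{\xi^2+\eta^2}}{2}}\leq\sqrt{g(0)}=\beta,\]
once one checks that $g(0)$ simplifies to the radicand defining $\beta$. Thus $\psi(i\rho,\sigma)\notin\Omega=\{w:\RE w>\beta\}$, and Lemma \ref{ALI} yields $\RE q(z)>0$, that is, $\RE(f(z)/z^p)>\gamma$ on $\disc$. The main obstacle is the honest verification that $h''(0)<0$ reduces precisely to \eqref{eq3.2} and that the very same inequality secures the positivity of $g(0)$; the remainder parallels Theorem \ref{th2.4} verbatim with the parameter $b$ carried through.
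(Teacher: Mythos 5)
Your proposal is correct, and its overall skeleton (the function $q$, the map $\psi$, the fixed-coefficient admissibility condition with $\zeta=b/(1-\gamma)$, and the second-derivative test at $\eta=0$) coincides with the paper's proof. The genuine difference lies in how the crux --- the sign of the second derivative --- is handled. The paper computes $g''(0)$ as a ratio of two quadratics in $\gamma$, factors both, and carries out a delicate analysis of the four roots $\gamma_1,\gamma_2,\gamma_3,\gamma_4$: it shows $\gamma_2\geq 1$ and $\gamma_4\geq 1$, rewrites \eqref{eq3.2} as $\gamma_3<\gamma<1$, and establishes $\gamma-\gamma_1>0$ through the auxiliary inequality $(\gamma_3-\gamma_1)(\gamma_2-\gamma_3)>0$, all in order to prove that the denominator of $g''(0)$ is positive. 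Your route is cleaner and bypasses this entirely: writing $h=g+\sqrt{g^2+\eta^2}$ with $g(\eta)=g(0)-c\eta^2$, $c=2/(p(2(1-\gamma)+b))$, you get the closed form $h''(0)=-4c+1/g(0)$, observe that \eqref{eq3.2} forces $g(0)>0$ outright (because $p^2(2(1-\gamma)+b)^2>0$, so \eqref{eq3.2} gives $8p\gamma(2(1-\gamma)+b)>16(1-\gamma)^2$), and then note that, given $g(0)>0$, the inequality $h''(0)<0$ is \emph{equivalent} to \eqref{eq3.2} after clearing denominators; I checked that your $h''(0)$ agrees exactly with the paper's $g''(0)$ and that $\sqrt{g(0)}=\beta$. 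Your version buys brevity and makes transparent both the role of \eqref{eq3.2} and the positivity of the radicand defining $\beta$; the paper's version buys the geometric reformulation of \eqref{eq3.2} as $\gamma_3<\gamma<1$, i.e., an explicit admissible range for $\gamma$. One further point in your favor: the paper's proof ends by invoking Lemma \ref{MM}, which is a slip (the admissibility condition used is the fixed-coefficient one), whereas you correctly close with Lemma \ref{ALI}. Two shared caveats, which you inherit from the paper rather than introduce: the second-derivative test alone only gives a local maximum at $\eta=0$ (a short extra argument on critical points, or the behavior of $h$ as $\eta\to\infty$, is needed for the global claim), and the case $b=0$ gives $\zeta=0$, formally outside the range $0<\zeta\leq 2$ of Lemma \ref{ALI}.
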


\begin{proof}
With the notations as defined in Theorem \ref{th2.4}, it is easily seen that $q \in \mathcal{H}_{\frac{b}{1-\gamma}}[1,1]$ and the theorem is proved if the admissibility condition $\RE \psi(i\rho,\sigma)\not\in \Omega$ whenever $\rho\in \mathbb{R}$ and
\[\sigma\leq -\frac{2(1-\gamma)}{2(1-\gamma)+b}(1+\rho^2)\]
is satisfied, where $\Omega=\{w:\RE w>\beta\}$. On similar lines, we obtain
\begin{align*}
\xi&\leq \gamma-\frac{2(1-\gamma)^2}{(2(1-\gamma)+b)p}\left(1+\frac{\eta^2}{(1-\gamma)^2}\right)\\
&=\frac{(4+(2+b)p)\gamma-2(1+p)\gamma^2-2(1+\eta^2)}{p(2(1-\gamma)+b)}
\end{align*}
so that
\begin{align*}
\xi+\sqrt{\xi^2+\eta^2}&\leq \frac{(4+(2+b)p)\gamma-2(1+p)\gamma^2-2(1+\eta^2)}{p(2(1-\gamma)+b)}\\
&\qquad\qquad+\sqrt{\left(\frac{(4+(2+b)p)\gamma-2(1+p)\gamma^2-2(1+\eta^2)}{p(2(1-\gamma)+b)}\right)^2+\eta^2}:=g(\eta).
\end{align*}
Observe that $g'(0)=0$ and
\begin{align*}
g''(0)&=\frac{p^2(2(1-\gamma)+b)^2+16(1-\gamma)^2-8p\gamma(2(1-\gamma)+b)}{p(2(1-\gamma)+b)((4+(2+b)p)\gamma-2(1+p)\gamma^2-2)}\\
      &=\frac{2(p+2)^2(\gamma-\gamma_3)(\gamma-\gamma_4)}{p(p+1)(2(1-\gamma)+b)(\gamma-\gamma_1)(\gamma_2-\gamma)}
\end{align*}
where $\gamma_i$ ($i=1,2,3,4$) are defined by
\[\gamma_1=\frac{4+(2+b)p-\sqrt{p}\sqrt{b(8+bp)+4p(1+b)}}{4(1+p)},\]
\[\gamma_2=\frac{4+(2+b)p+\sqrt{p}\sqrt{b(8+bp)+4p(1+b)}}{4(1+p)}\]
\[\gamma_3=\frac{8+p(2+b)(2+p)-4\sqrt{p^2(1+b)+2pb}}{2(p+2)^2},\]
\[\gamma_4=\frac{8+p(2+b)(2+p)+4\sqrt{p^2(1+b)+2pb}}{2(p+2)^2}.\]
Using the hypothesis, we see that the numerator of $g''(0)$ is negative. Also, since $\gamma_4\geq 1$ and $\gamma\in (0,1)$, therefore \eqref{eq3.2} may be re-written as $\gamma_3<\gamma<1$. In order to apply second derivative test, we need to show that $g''(0)<0$ or equivalently the denominator of $g''(0)$ is positive.

Note that $\gamma_2\geq 1$ so that $\gamma<1\leq \gamma_2$ which implies that $\gamma_2-\gamma>0$. It remains to show that $\gamma-\gamma_1>0$. Observe that
\[(\gamma_3-\gamma_1)(\gamma_2-\gamma_3)=\frac{p^2q(b,p)}{4 (1 + p) (2 + p)^4}>0\]
where $q(b,p)=b^2 (2 + p)^2 + 8 p (p + b (2 + p)) +4 (2 (p + b) + b p) \sqrt{p} \sqrt{p + b (2 + p)}$. Since $\gamma_2\geq 1>\gamma_3$, it follows that $\gamma_3-\gamma_1>0$ so that $ \gamma_1<\gamma_3<\gamma$ and hence $\gamma-\gamma_1>0$. Consequently the function $g$ attains its maximum at $\eta=0$. Thus
\[\RE\psi(i\rho,\sigma)=\sqrt{\frac{\xi+\sqrt{\xi^2+\eta^2}}{2}}\leq \sqrt{\frac{g(0)}{2}}\leq \sqrt{\frac{(4+(2+b)p)\gamma-2(1+p)\gamma^2-2}{p(b+2(1-\gamma))}}.\]
Lemma \ref{MM} gives the desired result.
\end{proof}

\begin{theorem}\label{th3.8}
Let $p\in\mathbb{N}$ and $0\leq b\leq 1$. If the function $f\in\mathcal{A}_{p,b}$ is locally $p$-valent and $$\RE\sqrt{\frac{f'(z)}{pz^{p-1}}}>\sqrt{\frac{(1+b)p}{8}}\quad (z\in \mathbb{D}),$$ then $$\RE \left(\frac{f(z)}{z^{p}}\right)>\frac{1}{2} \quad (z\in \mathbb{D}).$$
\end{theorem}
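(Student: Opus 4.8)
The plan is to reuse, essentially unchanged, the auxiliary functions and the reduction of Theorem~\ref{th2.5}, replacing only the admissibility bound of Lemma~\ref{MM} by the sharper fixed-coefficient bound of Lemma~\ref{ALI}. Thus I would again put
\[
q(z)=\frac{2f(z)}{z^{p}}-1,\qquad \psi(r,s)=\sqrt{\tfrac12\Bigl(r+\tfrac{s}{p}+1\Bigr)},
\]
so that $\sqrt{f'(z)/(pz^{p-1})}=\psi(q(z),zq'(z))$. The single new point is to track the second coefficient: since $f\in\mathcal{A}_{p,b}$ gives $f(z)/z^{p}=1+bz+\cdots$, we have $q(z)=1+2bz+\cdots$, so $q\in\mathcal{H}_{2b}[1,1]$ with fixed coefficient $\zeta=2b$. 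The hypothesis $0\le b\le1$ is exactly what guarantees $0<\zeta\le2$, so that Lemma~\ref{ALI} is available; the boundary value $b=0$ (where $\zeta=0$) I would treat separately through Lemma~\ref{MM}, using that then $q\in\mathcal{H}[1,n]$ with $n\ge2$ and hence $\sigma\le-(1+\rho^{2})$, which is precisely the $b=0$ instance of the bound used below. With $\Omega=\{w:\RE w>\sqrt{(1+b)p/8}\}$, the hypothesis reads $\psi(q(z),zq'(z))\in\Omega$, and the whole theorem reduces to checking the admissibility condition.

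For the admissibility step I would write $\psi(i\rho,\sigma)=\sqrt{\xi+i\eta}$ with $\xi=\tfrac12(\sigma/p+1)$ and $\eta=\rho/2$, just as in Theorem~\ref{th2.5}. The improvement is that Lemma~\ref{ALI} only demands the estimate for $\sigma\le-\frac{2}{2+\zeta}(1+\rho^{2})=-\frac{1}{1+b}(1+\rho^{2})$; substituting $\rho^{2}=4\eta^{2}$ this yields the upper bound $\xi\le\tfrac12\bigl(1-c(1+4\eta^{2})\bigr)$, where $c:=\frac{1}{p(1+b)}$. Since $\xi+\sqrt{\xi^{2}+\eta^{2}}$ is increasing in $\xi$, I may replace $\xi$ by this bound; writing $u=4\eta^{2}$ and $d=\tfrac{1-c}{c}$, the arising quantity factorises cleanly as
\[
\bigl(1-c(1+u)\bigr)^{2}+u=(1+u)\bigl((1-c)^{2}+c^{2}u\bigr)=c^{2}(1+u)(d^{2}+u),
\]
so that $\xi^{2}+\eta^{2}\le\frac{c^{2}}{4}(1+u)(d^{2}+u)$.

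The crux is then one application of the arithmetic--geometric mean inequality followed by an exact cancellation. Because the two factors $1+u$ and $d^{2}+u$ now carry the same coefficient on $u$, AM--GM bounds $\sqrt{\xi^{2}+\eta^{2}}$ by $\frac{c}{4}(1+d^{2}+2u)$; adding the bound on $\xi$, the terms proportional to $u$ (equivalently to $\eta^{2}$) cancel identically, and a one-line simplification collapses the sum to the $\eta$-free constant
\[
\xi+\sqrt{\xi^{2}+\eta^{2}}\le\frac{1}{4c}=\frac{p(1+b)}{4}.
\]
Consequently $\RE\psi(i\rho,\sigma)=\sqrt{(\xi+\sqrt{\xi^{2}+\eta^{2}})/2}\le\sqrt{(1+b)p/8}$, whence $\psi(i\rho,\sigma)\notin\Omega$ and Lemma~\ref{ALI} gives $\RE q(z)>0$, i.e.\ $\RE(f(z)/z^{p})>1/2$. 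I expect the only delicate point to be confirming that the $u$-terms cancel exactly; this is mechanical once the displayed factorisation is in place, and it is precisely what pins the threshold at $\sqrt{(1+b)p/8}$, recovering Theorem~\ref{th2.5} in the extremal case $b=1$ (where $c=1/(2p)$).
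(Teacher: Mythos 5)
Your proof is correct and follows essentially the same route as the paper's: the same $q$ and $\psi$ carried over from Theorem \ref{th2.5}, the admissibility bound $\sigma\le-(1+\rho^{2})/(1+b)$ from Lemma \ref{ALI} with fixed coefficient $\zeta=2b$, the same bound on $\xi^{2}+\eta^{2}$, and the same AM--GM step whose $\eta^{2}$-terms cancel to give the threshold $\sqrt{(1+b)p/8}$. Your factorisation $(1-c(1+u))^{2}+u=(1+u)\bigl((1-c)^{2}+c^{2}u\bigr)$ and the explicit monotonicity-in-$\xi$ justification merely streamline what the paper calls a ``tedious calculation,'' and your separate treatment of $b=0$ via Lemma \ref{MM} (needed since Lemma \ref{ALI} assumes $\zeta>0$) is a small point of rigour the paper passes over.
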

\begin{proof}
Proceeding as in the proof of Theorem \ref{th2.5} with $q$, $\psi$, $\zeta$, $\eta$ and $\xi$, we need to show that the admissibility condition
\[\RE\psi(i\rho,\sigma)\leq \sqrt{\frac{(1+b)p}{8}}\]
is satisfied whenever $\rho\in\mathbb{R}$ and $\sigma \leq -(1+\rho^2)/(1+b)$. Note that
\[\xi=\frac{1}{2}\left(\frac{\sigma}{p}+1\right)\leq \frac{1}{2}\left(-\frac{1}{p(1+b)}(1+4\eta^2)+1\right)\]
A tedious calculation shows that
\[\xi^2+\eta^2\leq \frac{(1+4\eta^2)(1 - 2 (1 + b) p + (1 + b)^2 p^2 + 4 \eta^2)}{4 (1 + b)^2 p^2}\]
so that
\begin{align*}
\sqrt{\xi^2+\eta^2}&\leq \frac{1}{2(1+b)p}\sqrt{(1+4\eta^2)(1 - 2 (1 + b) p + (1 + b)^2 p^2 + 4 \eta^2)}\\
&\leq \frac{2 - 2 (1 + b) p + (1 + b)^2 p^2 + 8 \eta^2}{4(1+b)p}
\end{align*}
and a computation gives
\[\RE\sqrt{\zeta}=\sqrt{\frac{\xi+\sqrt{\xi^2+\eta^2}}{2}}
   \leq \sqrt{\frac{(1+b)p}{8}} \qedhere\]
\end{proof}

Note that the results stated in Theorems \ref{th3.7} and \ref{th3.8} coincides with Theorems \ref{th2.4} and \ref{th2.5} if $b=2(1-\gamma)$ and $b=1$ respectively. Also, observe that although \cite[Theorem 2.11, p.\ 891]{SELVARAJ} has produced the same result as that in Theorem \ref{th3.8}, there is a calculation mistake in the proof of that theorem.

\section*{Acknowledgements}
The research work of first author presented here is supported by a grant from Council of Scientific and Industrial Research (CSIR), New Delhi.

 \end{document}